\newtheorem{theorem}{Theorem}[section]
\newtheorem{corollary}[theorem]{Corollary}
\newtheorem{proposition}[theorem]{Proposition}
\newtheorem{remark}[theorem]{Remark}
\theoremstyle{definition}
\newtheorem{definition}[theorem]{Definition}
\numberwithin{equation}{section} 
\DeclareMathOperator{\Idb}{{\mathbb I}}
\DeclareMathOperator{\Rdb}{{\mathbb R}}
\DeclareMathOperator{\Al}{{\mathcal A}}
\newcommand{\Sp}[1]{\operatorname{Sp}(#1)}
\newcommand{\norm}[1]{\left\Vert#1\right\Vert}
\numberwithin{equation}{section}
\begin{document}

\title[Operator entropies in JB-algebras]{Relative operator entropies and Tsallis relative operator entropies  in JB-algebras}

\author{Shuzhou Wang}
\address{Department of Mathematics, University of Georgia, Athens, GA, 30602}
\email{szwang@uga.edu}
\author{Zhenhua Wang}
\address{Department of Mathematics, University of Georgia, Athens, GA, 30602}
\email{ezrawang@uga.edu}

\subjclass[2010]{Primary 
	47A63,
	94A17, 
	47A56,	
	46H70, 
	47A60;
	Secondary 
	17C65, 
	46N50, 
	81R15,  
	81P45}
\keywords{Relative operator entropies, Tsallis relative operator entropies, JB-algebras, Operator inequalities, Nonassociative perspective}

\date{}

\begin{abstract}
	We initiate the study of relative operator entropies and Tsallis relative operator
	entropies in the setting of JB-algebras. 
	We establish their basic properties and 
	extend the operator inequalities on relative operator
	entropies and Tsallis relative operator entropies to this setting. 
	In addition, we improve the 
	lower and upper bounds of the relative operator $(\alpha, \beta)$-entropy 
	in the setting of JB-algebras that were established in 
	Hilbert space operators setting by Nikoufar \cite{Nikoufar14, Nikoufar2020}. 
	Though we employ the same notation as in the 
	classical setting of Hilbert space operators, 
	the inequalities in the setting of JB-algebras have different 
	connotations and their proofs requires techniques in JB-algebras. 
\end{abstract}

\maketitle

\section{Introduction}
Motivated by the study of quantum mechanics, Jordan, von Neumann, and Wigner 
investigated finite dimensional Jordan algebras in \cite{Jordan34}. 
Later, von Neumann studied the infinite dimensional Jordan algebras \cite{Neumann1936algebraic}. 
In \cite{Segal1947}, Segal initiated the study of JC-algebras, and  
Effros and St{\o}rmer \cite{ES67}, St{\o}rmer \cite{stormer1965, stormer1966, stormer1968} and Topping \cite{topping1965jordan}, among others, studied these algebras more thoroughly. The theory of JB-algebras was inaugurated by Alfsen, Shultz, and St{\o}rmer \cite{Alfsen78Gelfand} and later considered by many others. As a motivation for this line of research, 
observables in a quantum system constitute a JB-algebra which is non-associative, therefore JB-algebras were considered as natural objects of study for quantum system. 
In mathematics, JB-algebras also have many powerful applications in many fields, such as analysis, geometry, operator theory, etc; more information on these can be found in 
\cite{Chu, Upmeier, Upmeier1}.  
 
Entropy, as a measure of uncertainty, is a fundmental notion in  quantum information theory. A mathematical formulation of entropy was given by Segal \cite{Segal1960}. 
In order to understand the basis of Segal's notion, operator entropy $-A\log(A)$ for positive invertible operator $A$ was considered by Nakamura and Umegaki \cite{nakamura1961}. 
Later, the relative operator entropy was used by Umegaki \cite{umegaki1962}  
to study the measures of entropy and information. The concept of relative operator entropy $S(A\vert B)$ for strictly positive operators in noncommutative information theory was first introduced by Fujii and Kamei in \cite{Fujii89relative,Fujii89Uhlmann}.
As an extension of relative operator entropy, 
generalized relative operator entropy $S_{\alpha}(A\vert B)$ was studied by Furuta in \cite{Furuta04parametric}. Meanwhile, Tsallis relative operator entropy $T_{\lambda}(A\vert B)$ was investigated by Yanagi, Kuriyama and Furuichi \cite{YANAGI2005109}, 
which has the property $\lim\limits_{\lambda \to 0}T_{\lambda}(A\vert B)=S(A\vert B)$. Furthermore, the notion of relative operator $(\alpha, \beta)$-entropy was introduced by Nikoufar \cite{Nikoufar19} as follows:
\begin{align}
\label{groe2}
S_{\alpha,\beta}(A|B)=
A^{\frac{\beta}{2}}[(A^{-\frac{\beta}{2}}BA^{-\frac{\beta}{2}})^{\alpha}
\log(A^{-\frac{\beta}{2}}BA^{-\frac{\beta}{2}})]A^{\frac{\beta}{2}}
\end{align}
for invertible positive operators $A, B$ and any real numbers $\alpha, \beta.$
This has the properties that $S_{\alpha, 1}(A|B)=S_{\alpha}(A|B)$ and $S_{0, 1}(A|B)=S(A|B).$ More recently, relative operator entropies was used as a powerful tool to study quantum coherence \cite{Guo20quantifying}, a key notion in quantum information processing. 

	In \cite{Wang2020a}, we initiated the study of relative operator entropies in 
	the settings of $C^*$-algebras, real $C^*$-algebras and JC-algebras. 
	We extended operator inequalities on relative operator entropies to these settings, and 
	improved the lower and upper bounds of the relative operator entropy 
	which are new even for relative operator entropy defined on Hilbert space. 

In another preprint \cite{Wang2020b}, 
we studied operator means in the setting of JB-algebras, 
and obtained basic operator inequalities using 
{\bf nonassociative perspective function}, which is defined as follows:
\begin{align}\label{dnapf}
P_{f\triangle h}(A,B)
=\left\{h(B)^{\frac{1}{2}}f\left(\{h(B)^{-\frac{1}{2}}Ah(B)^{-\frac{1}{2}}\}\right)h(B)^{\frac{1}{2}} \right\},	
\end{align}
where $f$ and $h$ are real continuous function on a closed interval $\Idb$ with $h>0$ and $A, B$ are two elements in a unital JB-algebra with spectra contained in $\Idb$.
Our notion is a generalization of  \cite{Ebadian11perspective} for noncommuative associative case of Hilbert space operators. For commutative case, 
perspective function was inaugurated by Effros \cite{Effros09perspective}, in which  an ingenious and simple proof of the celebrated 
Lieb's concavity theorem \cite{LIEB1973267, Lieb19731938} was given.

In the present paper, we study relative operator entropies and Tsallis relative operator entropies in the setting of JB-algebras. 
We define in section 3 relative operator entropies and 
Tsallis relative operator entropies in the setting of JB-algebras and 
investigate their properties. 
In section 4, we extend the operator inequalities on relative operator
	entropies and Tsallis relative operator entropies to this setting; 
	we also improve the lower and upper bounds of the relative operator $(\alpha, \beta)$-entropy in the setting of 
	JB-algebras	that were established in Hilbert space operators setting by Nikoufar 
	\cite{Nikoufar14, Nikoufar2020} which refined the bounds for relative operator entropy obtained earlier by Fujii and Kamei \cite{Fujii89relative,Fujii89Uhlmann}.  
   Though we employ the same notation as in the classical setting of Hilbert space operators, 
   the properties and inequalities in the setting of JB-algebras 
   we establish in this paper have different connotations and 
   their proofs requires techniques in JB-algebras.

\section{Preliminaries} 

For convenience of the reader, we give some background on JB-algebras and 
fix the notation in this section. 

\begin{definition}\label{DefJa}
A {\bf Jordan algebra} $\Al$ over real number is a vector space $\Al$ over $\Rdb$ equipped with a  bilinear product $\circ$ that satisfies the following identities:
$$a\circ b =b\circ b, \,\ \,\ (a^2\circ b)\circ a=a^2\circ (b\circ a).$$
Any associative algebra $\Al$ has an underlying Jordan algebra structure with Jordan product given by 
$$a\circ b=(ab+ba)/2.$$
Jordan suablgebra of such underlying Jordan algebras is called {\bf special}. 
\end{definition}

As the important example in physics,  
$B(H)_{sa}$, the set of bounded self adjoint operators on a Hilbert space $H$,    
is a special Jordan algebra. Note that $B(H)_{sa}$ is not an associative algebra.

\begin{definition}
A concrete {\bf JC-algebra} $\Al$ is a norm-closed Jordan subalgebra of $B(H)_{sa}$.
\end{definition}
 \begin{definition}\label{DefJB}
 A {\bf JB-algebra} is a Jordan algebra $\Al$ over $\Rdb$ with a complete norm satisfying the following conditions for $A, B\in \Al:$ 
 \begin{align*}
 	\norm{A\circ B}\leq \norm{A}\norm{B},~~\norm{A^2}=\norm{A}^2,~~\mbox{and}~~\norm{A^2}\leq \norm{A^2+B^2}.	
 \end{align*}	
 \end{definition}

A JC-algebra is a JB-algebra, but the converse is not true. 
For example, the Albert algebra is a JB-algebra but not a JC-algebra, cf. \cite[Theorem 4.6]{Alfsen03Jordan}.  

\begin{definition}
Let $\Al$ be a unital JB-algebra. 
We say $A\in \Al$ is {\bf invertible} if there exists $B\in \Al,$ which is called {\bf Jordan inverse} of $A,$ such that  
\begin{align*}
	A\circ B=I \quad \mbox{and}\quad A^2\circ B=A.	
\end{align*}
 The {\bf spectrum} of $A$ is defined by 
 \begin{align*}
 \Sp{A}:=\{\lambda\in \Rdb| A-\lambda I\,\ \text{ is not invertible in} \Al \}.	
 \end{align*}
If $\Sp{A}\subset [0,\infty),$ we say $A$ is {\bf positive}, and write $A \geq 0$. 	
\end{definition}

 \begin{definition}
Let $\Al$ be a unital JB-algebra and $A, B \in \Al$.  We define a map $U_A$ on $\Al$ by
\begin{align}\label{JI}
	U_{A}B:=\{ABA\}:= 2(A\circ B)\circ A -A^2\circ B.
\end{align}
 \end{definition}
It follows from (\ref{JI}) that $U_A$ is linear, in particular, 
\begin{align}\label{LJI}
	U_A(B-C)=\{ABA\}-\{ACA\}.
\end{align} 
Note that $ABA$ is meaningless unless $\Al$ is special, in which case $\{ABA\}=ABA.$ 
The following proposition will be used repeatedly in this paper.

\begin{proposition}\cite[Lemma 1.23-1.25]{Alfsen03Jordan} \label{3inv}
Let $\Al$ be a unital JB-algebra and $A, B$ be two elements in $\Al$.
\begin{enumerate}
	\item If $B$ is positive, then $U_A(B)=\{ABA\}\geq 0.$ 
	\item If $A, B$ are invertible, then $\{ABA\}$ is invertible with inverse $\{A^{-1}B^{-1}A^{-1}\}.$
	\item If $A$ is invertible, then $U_A$ has a bounded inverse $U_{A^{-1}}.$
\end{enumerate}
\end{proposition}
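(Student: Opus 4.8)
The plan is to derive (1) and (2) by pushing two-variable Jordan identities into a Hilbert space, where braces become genuine triple products, and to treat (3) as the real content by combining the fundamental formula $U_{U_AB}=U_AU_BU_A$ with an approximation of a positive invertible element by its $2^{k}$-th roots. First I would record two facts read straight off \eqref{JI}: $U_A(I)=A^{2}$, and $U_A(A^{-1})=A$ whenever $A$ is invertible (the latter merely rewrites the two equations defining the Jordan inverse). I would also use the fundamental formula, which holds in every Jordan algebra by Macdonald's theorem, being linear in $B$ and verified in one line in $B(H)_{sa}$, where $\{XYX\}=XYX$.

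For (1) and (2) I would exploit that the norm-closed Jordan subalgebra $\Jl$ of $\Al$ generated by two elements and the identity is a JC-algebra, so the relevant computation can be done inside some $B(H)_{sa}$, with spectra — hence positivity and Jordan inverses — the same as in $\Al$. For (1), take $\Jl$ generated by $A$, $B^{1/2}$ (formed by the single-variable functional calculus, legitimate since $B\ge 0$) and $I$, and compute $\{ABA\}=AB^{1/2}B^{1/2}A=(B^{1/2}A)^{\ast}(B^{1/2}A)\ge 0$. For (2), take $\Jl$ generated by $A$, $B$, $I$; it contains $A^{-1}$ and $B^{-1}$, and there $A^{-1}B^{-1}A^{-1}$ is the operator inverse of $ABA$, hence satisfies the identities $X\circ\{ABA\}=I$ and $X^{2}\circ\{ABA\}=X$ that define the Jordan inverse — relations phrased only through $\circ$, so they persist back in $\Al$. (Once (3) is in hand, (2) can alternatively be read off from $U_{\{ABA\}}=U_AU_BU_A$ together with $U_A^{-1}=U_{A^{-1}}$ and $U_X(X^{-1})=X$.)

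The heart of the matter is (3), and I would prove it first for $A>0$. Iterating the identity $U_A=U_{A^{1/2}}^{2}$ — itself an instance of the fundamental formula, since $A=U_{A^{1/2}}(I)$ — gives $U_A=U_{A^{1/2^{k}}}^{2^{k}}$ for all $k$. Since $A$ is positive and invertible, $\Sp{A}\subseteq[\ep,\norm{A}]$ with $\ep>0$, so $A^{1/2^{k}}\to I$ in norm, and a short estimate of $U_{I+E}$ for small $E$ gives $\norm{U_{A^{1/2^{k}}}-\mathrm{id}}<1$ for large $k$; then $U_{A^{1/2^{k}}}$ is invertible by a Neumann series, and so $U_A=U_{A^{1/2^{k}}}^{2^{k}}$ is invertible with bounded inverse. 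Cancelling the invertible factor $U_{A^{1/2^{k}}}$ in the fundamental-formula relation $U_{A^{1/2^{k}}}U_{A^{-1/2^{k}}}U_{A^{1/2^{k}}}=U_{A^{1/2^{k}}}$ identifies $U_{A^{1/2^{k}}}^{-1}=U_{A^{-1/2^{k}}}$, and raising to the $2^{k}$-th power yields $U_A^{-1}=U_{A^{-1}}$. For a general invertible $A$, the element $A^{2}$ is positive and invertible, so by the previous case $U_{A^{2}}=U_A^{2}$ is invertible, whence $U_A$ itself is invertible with bounded inverse; cancelling $U_A$ in $U_AU_{A^{-1}}U_A=U_A$ then gives $U_A^{-1}=U_{A^{-1}}$. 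I expect the one real obstacle to be this last step — passing from the near-identity range, where invertibility of $U$ comes for free, to an arbitrary invertible element — and the dyadic-root identity $U_A=U_{A^{1/2^{k}}}^{2^{k}}$, together with $U_{A^{2}}=U_A^{2}$ carrying the positive case to the general one, is exactly what makes that step go through.
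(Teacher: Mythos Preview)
The paper does not prove this proposition at all; it is quoted verbatim from Alfsen--Shultz, Lemmas 1.23--1.25, with no argument supplied. So there is no proof in the paper to compare your proposal against.

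Your proposal is essentially correct, with one slip and one caveat worth flagging. The slip: in justifying the fundamental formula $U_{\{ABA\}}=U_AU_BU_A$ you say it is ``linear in $B$'' and hence falls under Macdonald's theorem. It is not linear in $B$; applied to a test element $C$, the identity is quadratic in $B$ and linear in $C$. The three-variable version of Macdonald's theorem that applies here is the one for identities linear in \emph{one} of the variables --- and that variable is $C$, not $B$. The conclusion stands, but name the right variable (or simply cite the fundamental formula as the known Jordan-algebra identity it is).

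The caveat: your route to (1) and (2) through ``the JB-subalgebra generated by two elements and $I$ is a JC-algebra'' invokes Shirshov--Cohn together with the representation theory of special JB-algebras. That is true, but in Alfsen--Shultz it is established well \emph{after} Lemmas 1.23--1.25, so if your aim is to reconstruct their development you would be arguing in a circle; as a free-standing proof it is valid, just heavier than necessary. Your treatment of (3) --- iterating $U_A=U_{A^{1/2^{k}}}^{2^{k}}$, using norm convergence $A^{1/2^{k}}\to I$ to make $U_{A^{1/2^{k}}}$ invertible by a Neumann series, then cancelling in $U_CU_{C^{-1}}U_C=U_C$ and bootstrapping from positive to general invertible $A$ via $U_{A^{2}}=U_A^{2}$ --- is clean and correct.
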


For an element $A$ in $\Al$ and a continuous function $f$ on the spectrum of $A$, 
$f(A)$ is defined by functional calculus in JB-algebras (see e.g. \cite[Proposition 1.21]{Alfsen03Jordan}).
\begin{definition}
Let $f$ is a real valued continuous function $f$ on $\Rdb.$ 
\begin{enumerate}
	\item $f$ is said to be 
	{\bf operator monotone (increasing)} on a JB-algebra $\Al$ if 
	$0\leq A\leq B$ implies $f(A)\leq f(B)$.
	\item $f$ is {\bf operator convex} if for any $\lambda \in [0, 1]$ and $A, B\geq 0,$
	$$f((1-\lambda)A+\lambda B)\leq (1-\lambda) f(A)+\lambda f(B).$$
	We say that $f$ is {\bf operator concave} if $-f$ is operator convex.
\end{enumerate}
\end{definition}

\section{Relative operator entropy and Tsallis relative operator entropy}

\begin{definition}
Let $A, B$ be two positive invertible elements in a unital JB-algebra $\Al.$   The {\bf relative operator entropy} $S(A\vert B)$ is defined by 
\begin{align}
\label{jbroe}
S(A\vert B)&:=\left\{A^{\frac{1}{2}}\log\left(\left\{A^{-\frac{1}{2}}BA^{-\frac{1}{2}}\right\}\right)A^{\frac{1}{2}}\right\}.
\end{align}
For any $\lambda\in(0,1],$ the {\bf Tsallis relative operator entropy} $T_{\lambda}(A\vert B)$  is defined by 
\begin{align}
\label{jbtoe}
T_{\lambda}(A\vert B)&: =\frac{A\#_{\lambda} B-A}{\lambda},	
\end{align}
where $A\#_{\lambda} B :=\left\{A^{\frac{1}{2}}\{A^{-\frac{1}{2}}BA^{-\frac{1}{2}}\}^{\lambda}A^{\frac{1}{2}}\right\}$ is the weighted geometric mean; for more information, see \cite{Wang2020b}.	
\end{definition}

\begin{theorem}\label{tjroei}
Let $A$ and $B$ be positive invertible. Then
\begin{align}\label{jroei}
S(A\vert B)	=\int_0^1 \dfrac{A!_t B- A}{t}dt,
\end{align}
where $A!_t B:= \left((1-\lambda)A^{-1}+\lambda B^{-1}\right)^{-1}$ is the weighted harmonic mean. 
\end{theorem}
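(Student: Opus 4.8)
The plan is to reduce the identity to a pointwise statement about real numbers via functional calculus, exactly as one expects the relative operator entropy formula $S(A\vert B) = \int_0^1 (A!_t B - A)/t\, dt$ to work in the associative case, then make the reduction rigorous in the JB-algebra setting. First I would normalize: using the operator $U_{A^{1/2}}$ and the fact that it is linear with bounded inverse $U_{A^{-1/2}}$ (Proposition \ref{3inv}), I would reduce to the case $A = I$. Concretely, one checks that $A!_t B = \{A^{1/2}(I !_t \{A^{-1/2} B A^{-1/2}\}) A^{1/2}\}$, so that, writing $C = \{A^{-1/2} B A^{-1/2}\}$ (a positive invertible element), both sides of \eqref{jroei} are the image under $U_{A^{1/2}}$ of the corresponding expression with $A$ replaced by $I$ and $B$ replaced by $C$. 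Since $U_{A^{1/2}}$ is bounded and linear, it commutes with the norm-convergent Riemann integral, so it suffices to prove $\log C = \int_0^1 (I !_t C - I)/t\, dt$ in the JB-subalgebra generated by $C$ and $I$.

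Second, the element $C$ together with $I$ generates an \emph{associative} (indeed commutative) JB-subalgebra of $\Al$, which by the spectral theorem for JB-algebras is isometrically isomorphic to $C(\Sp{C})$ or more precisely to a $C(X)$ on which $C$ acts as a strictly positive continuous function. Under this identification $I !_t C$ becomes the function $x \mapsto ((1-t) + t x^{-1})^{-1} = x/((1-t)x + t)$, and the functional calculus is just composition of functions. So the whole identity reduces to the scalar fact that for each $x > 0$,
\begin{align}\label{scalarid}
\log x = \int_0^1 \frac{1}{t}\left(\frac{x}{(1-t)x + t} - 1\right) dt,
\end{align}
which is an elementary computation: the integrand equals $\frac{x-1}{(1-t)x+t}$, and integrating in $t$ gives $\log x$ directly (the apparent singularity at $t=0$ is removable since the integrand tends to $x-1$).

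Third, I must justify interchanging the integral with the functional calculus / the isomorphism. The clean way is to observe that $t \mapsto (I !_t C - I)/t$ is a norm-continuous $\Al$-valued function on $[0,1]$ (continuous at $0$ with value $\log C$, by \eqref{scalarid} applied uniformly on the compact spectrum of $C$), hence Riemann integrable in norm; then the bounded linear isomorphism onto $C(X)$ carries this integral to the pointwise integral \eqref{scalarid}, which we have evaluated. The main obstacle—really the only subtle point—is handling the endpoint $t = 0$: one must check that $(I !_t C - I)/t$ extends continuously to $t=0$ with value $\log C$, uniformly over the spectrum of $C$, so that the integral is a genuine (improper-free) norm-convergent integral and the scalar identity \eqref{scalarid} holds with the integrand bounded. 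This follows from the fact that $(x/((1-t)x+t) - 1)/t = (x-1)/((1-t)x+t)$ is jointly continuous on $[0,1] \times \Sp{C}$, but it deserves to be stated carefully since it is what makes the formula well-posed in the JB-algebra setting where we only have continuous functional calculus and norm convergence to work with.
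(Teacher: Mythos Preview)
Your approach is essentially the paper's: both start from the scalar identity $\log x = \int_0^1 t^{-1}\bigl[(1-t+tx^{-1})^{-1} - 1\bigr]\,dt$, apply functional calculus to $C = \{A^{-1/2}BA^{-1/2}\}$, and use Proposition~\ref{3inv} to pass $U_{A^{1/2}}$ through the integral and recover $A!_t B$; the paper simply performs the conjugation after the functional calculus rather than before, and does not spell out your endpoint discussion. One minor slip to fix: the continuous extension of $(I!_tC-I)/t$ at $t=0$ is $I-C^{-1}$, not $\log C$ (equivalently, the scalar integrand $(x-1)/\bigl((1-t)x+t\bigr)$ tends to $(x-1)/x$ as $t\to 0$, not to $x-1$), but this does not affect the argument since all you actually need is continuity on $[0,1]$.
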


\begin{proof}
The following identity is established in the proof of \cite[Proposition 3]{Wang2020b}
for $x>0$, 
\begin{align}\label{logi}
\log x =\int_0^1 \dfrac{(1-t+tx^{-1})^{-1}-1}{t}dt.	
\end{align}

Applying functional calculus in JB-algebras (see \cite[Proposition 1.21]{Alfsen03Jordan}) to 
(\ref{logi}) and by Proposition \ref{3inv}, 
\begin{align}\label{jlogi1}
\log\left(\left\{A^{-\frac{1}{2}}BA^{-\frac{1}{2}}\right\}\right)
&=\int_0^1 \frac{[(1-t)I+t\{A^{\frac{1}{2}}B^{-1}A^{\frac{1}{2}}\}]^{-1}-I}{t}dt.	
\end{align}
Therefore, by  Proposition \ref{3inv} again, 
\begin{align}
S(A\vert B)
&=\left\{A^{\frac{1}{2}}\int_0^1 \dfrac{\left[(1-t)I+t\{A^{\frac{1}{2}}B^{-1}A^{\frac{1}{2}}\}\right]^{-1}-I}{t}dtA^{\frac{1}{2}}\right\}  
\nonumber \\
&=\int_0^1 \dfrac{\left\{A^{-\frac{1}{2}}\left[(1-t)I+t\{A^{\frac{1}{2}}B^{-1}A^{\frac{1}{2}}\}\right]A^{-\frac{1}{2}}\right\}^{-1}-A}{t}dt 
 \nonumber \\
&=\int_0^1 \dfrac{\left[(1-t)A^{-1}+t B^{-1}\right]^{-1}-A}{t}dt 
 \nonumber \\
&=\int_0^1 \dfrac{A!_t B- A}{t}dt.
\end{align}
\end{proof}

\begin{proposition}\label{jroexlogx}
Let $A$ and $B$ be positive invertible. Then
\begin{align}\label{jroexlogx1}
S(A\vert B)=\left\{B^{\frac{1}{2}}\left[-\{B^{-\frac{1}{2}}AB^{-\frac{1}{2}}\}\circ \log(\{B^{-\frac{1}{2}}AB^{-\frac{1}{2}}\})\right]B^{\frac{1}{2}}\right\}.	
\end{align}	
\end{proposition}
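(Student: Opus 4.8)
The plan is to start from the classical scalar identity $-x\log x = x\log(x^{-1})$ and lift it through the functional calculus and the $U$-operator machinery, but the real work is juggling the two different ``sandwich'' elements $\{A^{-1/2}BA^{-1/2}\}$ and $\{B^{-1/2}AB^{-1/2}\}$, which are inverse to each other after conjugation. First I would set $X := \{A^{-\frac12}BA^{-\frac12}\}$ and $Y := \{B^{-\frac12}AB^{-\frac12}\}$; both are positive invertible by Proposition \ref{3inv}. The key algebraic fact I would establish is that $U_{B^{\frac12}}\!\left(U_{A^{-\frac12}} Z\right)$ can be rewritten so that conjugating $X$ by $A^{\frac12}$ and conjugating $Y^{-1}$ by $B^{\frac12}$ give the same element — concretely, that $\{A^{\frac12} X A^{\frac12}\} = B = \{B^{\frac12} Y^{-1} \cdot (\text{something}) \}$ fails naively, so instead one uses $U_{A^{\frac12}} U_{A^{-\frac12}} = \mathrm{id}$ together with the identity $U_{A^{\frac12}} X^{\lambda} $ handled via the fundamental formula $U_{U_A B} = U_A U_B U_A$. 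The cleanest route: show $Y$ and $X^{-1}$ are related by $Y = \{B^{-\frac12}A B^{-\frac12}\}$ and $\{B^{\frac12} Y^{\mu} B^{\frac12}\} = \{A^{\frac12}(\{A^{-\frac12}BA^{-\frac12}\})^{?}A^{\frac12}\}$-type transfer; this is exactly the content behind the symmetry $A \#_\lambda B = B\#_{1-\lambda}A$ for geometric means, which is proved in \cite{Wang2020b}.

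Concretely, I would proceed as follows. Apply functional calculus to the scalar identity $-t\log t = t\log(t^{-1})$ with $t$ replaced by the element $Y = \{B^{-\frac12}AB^{-\frac12}\}$, giving
\begin{align}\label{pp1}
-Y\circ\log Y = \{Y^{\frac12}\log(Y^{-1})Y^{\frac12}\},
\end{align}
using that $Y^{\frac12}\circ(Y^{\frac12}\circ g(Y)) = Y\circ g(Y)$ for the single-generated associative JB-subalgebra generated by $Y$, where functional calculus is genuinely commutative. Then sandwich \eqref{pp1} by $U_{B^{\frac12}}$: the right-hand side becomes $U_{B^{\frac12}} U_{Y^{\frac12}} \log(Y^{-1}) = U_{B^{\frac12}\circ\text{-conjugate of }Y^{\frac12}}\,\log(Y^{-1})$ by the fundamental identity $U_{\{AB A\}} = U_A U_B U_A$ applied with a square-root bookkeeping, and one identifies $U_{B^{\frac12}}U_{Y^{\frac12}} = U_{C}$ where $C = \{B^{\frac12}Y^{\frac12}B^{\frac12}\}^{?}$; matching this with $A^{\frac12}$ uses that $\{B^{\frac12}YB^{\frac12}\} = \{B^{\frac12}\{B^{-\frac12}AB^{-\frac12}\}B^{\frac12}\} = A$ by Proposition \ref{3inv}(2)-(3). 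Simultaneously, $\log(Y^{-1})$ transforms: since $U_{B^{\frac12}}$ on the associative subalgebra generated by $Y$ and the identity $\log(\{B^{-\frac12}AB^{-\frac12}\}^{-1}) = \log(\{B^{\frac12}A^{-1}B^{\frac12}\})$ (Proposition \ref{3inv}(2)), and then the transfer formula for functional calculus under $U$-operators, one gets $\{A^{\frac12}\log(\{A^{-\frac12}BA^{-\frac12}\})A^{\frac12}\}$, which is exactly $S(A|B)$.

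The step I expect to be the main obstacle is the square-root bookkeeping in ``$U_{B^{\frac12}} U_{Y^{\frac12}} = U_{A^{\frac12}}$'': the fundamental formula gives $U_{B^{\frac12}}U_{Y}U_{B^{\frac12}} = U_{\{B^{\frac12}YB^{\frac12}\}} = U_A$, but I need the ``half'' version $U_{B^{\frac12}}U_{Y^{\frac12}}$ to equal $U_{A^{\frac12}}$, which does not follow formally from operator identities alone — it requires that $\{B^{\frac12}Y^{\frac12}B^{\frac12}\}$ be a positive square root of $A$ hence equal to $A^{\frac12}$ by uniqueness, and even then $U_{B^{\frac12}}U_{Y^{\frac12}}$ is generally not $U$ of anything simple. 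The right fix is to avoid splitting and instead work with the weighted geometric mean directly: write $S(A|B) = \int_0^1 \frac{A\#_\lambda B - A}{\lambda}\,d\lambda$-type formulas (or differentiate $A\#_\lambda B$ at $\lambda = 0$, cf. the Tsallis definition), use the symmetry $A\#_\lambda B = B\#_{1-\lambda}A$ from \cite{Wang2020b} to swap the roles of $A$ and $B$, and then re-integrate; this sidesteps the problematic half-power of the $U$-operator entirely, at the cost of an extra limit/integral argument. Alternatively, one can reduce to the special (JC-algebra) case: every pair $A, B$ generates a special Jordan subalgebra (by Shirshov–Cohn, since the associative subalgebra generated by two elements embeds), on which $\{PQP\} = PQP$ literally, making \eqref{jroexlogx1} a routine operator computation; then invoke that the identity, being an equality of continuous functional-calculus expressions in $A$ and $B$, holds in any JB-algebra. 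I would present the Shirshov–Cohn reduction as the main line and remark on the mean-theoretic route.
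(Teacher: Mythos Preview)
Your Shirshov--Cohn reduction would work, but it is a different route from the paper's and carries a subtlety you skip: Shirshov--Cohn is an algebraic statement, and here you need the \emph{norm-closed} JB-subalgebra generated by $A,B,1$ to be a JC-algebra, which requires invoking structure theory (no exceptional quotient) rather than just the polynomial identity. The paper instead mirrors the proof of Theorem~\ref{tjroei}: it takes the integral representation
\[
-x\log x \;=\; \int_0^1 \frac{[(1-t)x^{-1}+t]^{-1}-x}{t}\,dt,
\]
applies functional calculus at $Y=\{B^{-1/2}AB^{-1/2}\}$, and then conjugates the whole integral by $U_{B^{1/2}}$. The point is that $U_{B^{1/2}}$ interacts with this particular integrand using only Proposition~\ref{3inv} (linearity of $U$ and the inversion rule $\{B^{1/2}C^{-1}B^{1/2}\}=\{B^{-1/2}CB^{-1/2}\}^{-1}$), collapsing it directly to $\int_0^1 t^{-1}(A!_tB-A)\,dt$, which is $S(A\vert B)$ by Theorem~\ref{tjroei}. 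No square-root bookkeeping on $U$-operators, no speciality hypothesis, no geometric-mean symmetry. Your alternative fix via differentiating $A\#_\lambda B = B\#_{1-\lambda}A$ at $\lambda=0$ is actually closer in spirit to this---it also reduces the identity to a previously established symmetry of a mean---but the paper's harmonic-mean route stays entirely within the elementary $U$-operator calculus of Proposition~\ref{3inv}, which is exactly the toolkit the paper is trying to showcase for the JB-setting.
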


\begin{proof}
According to the proof of \cite[Proposition 3]{Wang2020b}, we have
\begin{align}
-x\log x 
&=\int_0^1\dfrac{[(1-t)x^{-1}+t]^{-1}-x}{t}dt. 
\label{xlogxi}
\end{align}	
Denote
\begin{align*}
E=-\left\{B^{-\frac{1}{2}}AB^{-\frac{1}{2}}\right\}\circ 
\log \left( \{B^{-\frac{1}{2}}AB^{-\frac{1}{2}}\} \right).	
\end{align*}
Utilizing functional calculus in JB-algebras for (\ref{xlogxi}),  
\begin{align}
E=\int_0^1\dfrac{[(1-t)\{B^{-\frac{1}{2}}AB^{-\frac{1}{2}}\}^{-1}+t]^{-1}-\{B^{-\frac{1}{2}}AB^{-\frac{1}{2}}\}}{t}dt
\end{align}
Therefore,
\begin{align*}
\left\{ B^{\frac{1}{2}}EB^{\frac{1}{2}} \right\}
&=\left\{B^{\frac{1}{2}}\int_0^1\dfrac{[(1-t)\{B^{-\frac{1}{2}}AB^{-\frac{1}{2}}\}^{-1}+t]^{-1}-\{B^{-\frac{1}{2}}AB^{-\frac{1}{2}}\}}{t}dtB^{\frac{1}{2}}\right\}\\
&=\int_0^1\dfrac{\left\{B^{-\frac{1}{2}}[(1-t)\{B^{-\frac{1}{2}}AB^{-\frac{1}{2}}\}^{-1}+t]B^{-\frac{1}{2}}\right\}^{-1}-A}{t}dt	\\
&=\int_0^1 \dfrac{\left[(1-t)A^{-1}+t B^{-1}\right]^{-1}-A}{t}dt\\
&=\int_0^1 \dfrac{A!_t B- A}{t}dt\\
&=S(A\vert B).
\end{align*}
\end{proof}

\begin{proposition}\label{xlogx}
Let $\Al$ be a unital JB-algebra. The functions $x\to -x\log x$ is operator concave on $(0,\infty).$
\end{proposition}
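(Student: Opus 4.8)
The plan is to reduce the statement to a known scalar integral representation and then transfer it to the JB-algebra setting using functional calculus together with the properties of $U_A$ recorded in Proposition~\ref{3inv}. The key observation is that formula \eqref{xlogxi}, already established in the excerpt, writes $-x\log x$ as an integral of the scalar functions $t\mapsto [(1-t)x^{-1}+t]^{-1}$ over $t\in(0,1)$. For each fixed $t\in(0,1)$ the map $x\mapsto \bigl[(1-t)x^{-1}+t\bigr]^{-1}$ is, up to an affine reparametrization, a weighted harmonic mean of $x$ with the constant $1$, hence is operator monotone and operator concave on $(0,\infty)$; concretely it equals $x\,!_{1-t}\,1$ in the notation of the paper. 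So $-x\log x$ is an integral (a limit of convex combinations) of operator concave functions, and operator concavity is preserved under such limits. This is the skeleton of the argument.

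The steps, in order, are as follows. First I would fix $t\in(0,1)$ and record that $g_t(x):=\bigl[(1-t)x^{-1}+t\bigr]^{-1}$ is operator concave on $(0,\infty)$; this is the standard fact that the harmonic mean $(A,B)\mapsto A\,!_s\,B$ is jointly concave, specialized to $B=I$, or alternatively that $x\mapsto -x^{-1}$ is operator monotone and operator convex so its reciprocal-type expression $g_t$ is operator concave. Second, I would note that for positive invertible $A,B$ in a unital JB-algebra $\Al$ and $\lambda\in[0,1]$, the functional-calculus element $g_t((1-\lambda)A+\lambda B)$ dominates $(1-\lambda)g_t(A)+\lambda g_t(B)$ in $\Al$; this follows because operator concavity of $g_t$ is a statement that can be checked in the JB-algebra generated by $A$ and $B$, which by the spin factor / Shirshov--Cohn type theory is either special (a JC-algebra, where the Hilbert space inequality applies) or a small exceptional factor that is handled separately — and in fact the paper's companion work \cite{Wang2020b} already establishes that such perspective/mean functions behave well in JB-algebras, so I would cite that rather than re-derive it. Third, I would apply functional calculus in JB-algebras (\cite[Proposition 1.21]{Alfsen03Jordan}) to \eqref{xlogxi}, obtaining $-C\log C=\int_0^1 \dfrac{g_t(C)-C}{t}\,dt$ for any positive invertible $C$, and then for $C_\lambda:=(1-\lambda)A+\lambda B$ combine this with the pointwise-in-$t$ concavity inequality under the integral sign and linearity of the integral to conclude
\begin{align*}
-C_\lambda\log C_\lambda
=\int_0^1 \dfrac{g_t(C_\lambda)-C_\lambda}{t}\,dt
\geq \int_0^1 \dfrac{(1-\lambda)g_t(A)+\lambda g_t(B)-C_\lambda}{t}\,dt
=(1-\lambda)(-A\log A)+\lambda(-B\log B).
\end{align*}
Finally I would address invertibility: the definition of operator concave in the excerpt only requires $A,B\geq 0$, so after proving the inequality for invertible $A,B$ I would pass to general positive $A,B$ by replacing them with $A+\varepsilon I$, $B+\varepsilon I$ and letting $\varepsilon\downarrow 0$, using norm-continuity of functional calculus for the continuous function $x\mapsto -x\log x$ (extended by $0$ at $x=0$) on a common compact spectral interval.

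The main obstacle is the second step: justifying that the scalar operator concavity of $g_t$ upgrades to an honest inequality in an arbitrary JB-algebra, including the exceptional Albert algebra, where one cannot simply invoke Hilbert-space operator theory. The clean way around this is to lean on the machinery already set up in \cite{Wang2020b} for the nonassociative perspective function $P_{f\triangle h}$ in \eqref{dnapf}: taking $f(x)=x$ (or the relevant elementary $f$) and $h$ linear recovers exactly the $g_t$-type expressions, and the concavity/monotonicity statements proved there apply verbatim. A secondary, more routine point is interchanging the integral with the order relation and with functional calculus; this is legitimate because the integrand is norm-continuous in $t$ on any $[\delta,1-\delta]$ and the endpoint singularities at $t=0,1$ are integrable (the numerator vanishes to first order at $t=0$, matching the $1/t$, and $g_t$ stays bounded near $t=1$), so the Riemann integral converges in norm and both operations pass through.
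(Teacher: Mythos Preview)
Your argument is correct and follows the same strategy as the paper --- express $-x\log x$ as an integral of elementary operator-concave functions and then pass the concavity through the integral --- but you choose a different integral representation. The paper uses the $(0,\infty)$-representation
\[
-x\log x=\int_0^\infty h_\alpha(x)\,d\alpha,\qquad h_\alpha(x)=1-\alpha(\alpha+x)^{-1}-x(\alpha+1)^{-1},
\]
citing (4.5), (4.9) and Lemma~1 of \cite{Wang2020b} for, respectively, the operator concavity of $-\alpha(\alpha+x)^{-1}$, the integral identity, and the preservation of concavity under integration. You instead use the harmonic-mean integral \eqref{xlogxi} already recorded in Proposition~\ref{jroexlogx}. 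These are essentially reparametrizations of each other: writing $u=(1-t)/t$ one has $g_t(x)=[(1-t)x^{-1}+t]^{-1}=\tfrac{1}{t}\bigl(1-u(u+x)^{-1}\bigr)$, so your building block is again affine plus a positive multiple of $-c(c+x)^{-1}$, and the JB-algebra concavity you need is exactly the (4.5) cited in the paper rather than the vaguer appeal to joint concavity of harmonic means. Two minor points: your identification $g_t(x)=x\,!_{1-t}\,1$ should read $x\,!_t\,1$ from the definition $A!_tB=((1-t)A^{-1}+tB^{-1})^{-1}$; and the paper's $(0,\infty)$-integrand is already affine-plus-resolvent with no $1/t$ prefactor, so it sidesteps the endpoint analysis you (correctly) carried out. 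Your route has the small expository advantage of reusing a formula already established in the paper.
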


\begin{proof}
Denote $h_{\alpha}(x)=1-\alpha(\alpha +x)^{-1}- x(\alpha +1)^{-1}.$
By (4.5) in \cite{Wang2020b}, $-\alpha(\alpha +x)^{-1}$ is operator concave on $(0,\infty).$ Thus, $h_{\alpha}(x)$ is also operator concave on $(0,\infty).$
By (4.9) in \cite{Wang2020b},
\begin{align*}	
-x\log x &=\int_0^{\infty}x[(\alpha +x)^{-1}-(\alpha+1)^{-1}]d\alpha \\
&=\int_0^{\infty} h_{\alpha}(x) d\alpha
\end{align*}	
By \cite[Lemma 1]{Wang2020b}, $-x\log x$ is operator concave.
\end{proof}

\begin{proposition}
	\label{roeprop}
The relative operator entropy $S(A\vert B)$ defined in $\Al$ has the following properties:
\begin{itemize}
\item[(i)] $S(\alpha A\vert \alpha B)=\alpha S(A\vert B)$ for any positive number $\alpha.$
\item[(ii)] If $B\leq C,$	 then $S(A\vert B)\leq S(A\vert C).$
\item[(iii)] $S(A\vert B)$ is operator concave with respect to $A, B$ individually. 
\item[(iv)] $S(\{CAC\}\vert \{CBC\})=\{C S(A\vert B)C\},$ for any invertible $C$ in $\Al.$
\end{itemize}
\end{proposition}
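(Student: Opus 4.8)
The plan is to establish the four properties in order, leaning on the integral representation $S(A\vert B)=\int_0^1 \frac{A!_t B-A}{t}\,dt$ from Theorem \ref{tjroei}, the alternative representation in Proposition \ref{jroexlogx}, the operator concavity of $-x\log x$ from Proposition \ref{xlogx}, and the algebraic identities for $U_A$ in Proposition \ref{3inv}. For (i), I would simply substitute $\alpha A,\alpha B$ into \eqref{jbroe}: since $\{(\alpha A)^{-1/2}(\alpha B)(\alpha A)^{-1/2}\}=\{A^{-1/2}BA^{-1/2}\}$ (the scalar $\alpha$ cancels because $U_{\alpha^{-1/2}A^{-1/2}}(\alpha B)=\alpha^{-1}\cdot\alpha\,U_{A^{-1/2}}(B)$), the $\log$ term is unchanged, and then $U_{(\alpha A)^{1/2}}=\alpha\,U_{A^{1/2}}$ pulls out the factor $\alpha$. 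One can alternatively run this through the integral formula \eqref{jroei} using $(\alpha A)!_t(\alpha B)=\alpha(A!_t B)$.

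For (ii), monotonicity in the second variable, I would use \eqref{jroei}. If $B\le C$ then $C^{-1}\le B^{-1}$ (inversion is operator antimonotone on the positive cone of a JB-algebra), hence $(1-t)A^{-1}+tC^{-1}\le (1-t)A^{-1}+tB^{-1}$, and inverting again reverses the inequality to give $A!_tB\le A!_tC$ for each $t\in(0,1)$; integrating $\frac{1}{t}(A!_tB-A)\le\frac{1}{t}(A!_tC-A)$ over $(0,1)$ yields $S(A\vert B)\le S(A\vert C)$. Here I use that the order is preserved under integration and that inversion and $U_{A^{-1/2}}$-type conjugations are order preserving, all of which are standard JB-algebra facts (Proposition \ref{3inv} and the spectral/functional-calculus machinery cited in Section 2).

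For (iii), I would treat the two variables separately. Concavity in $B$: the map $B\mapsto A!_tB=((1-t)A^{-1}+tB^{-1})^{-1}$ is a weighted harmonic mean, which is operator concave in $B$ (the harmonic mean is jointly operator concave; here fixing $A$ suffices), so each integrand $\frac{1}{t}(A!_tB-A)$ is operator concave in $B$, and the integral of operator concave functions is operator concave — this last step is exactly the lemma cited as \cite[Lemma 1]{Wang2020b} used in Proposition \ref{xlogx}. Concavity in $A$: I would instead use Proposition \ref{jroexlogx}, writing $S(A\vert B)=\{B^{1/2}\,g(\{B^{-1/2}AB^{-1/2}\})\,B^{1/2}\}$ with $g(x)=-x\log x$, which is operator concave on $(0,\infty)$ by Proposition \ref{xlogx}; since $A\mapsto\{B^{-1/2}AB^{-1/2}\}=U_{B^{-1/2}}(A)$ is linear and positive, and pre/post-composing an operator concave function with a linear positive map and then applying the positive linear map $U_{B^{1/2}}$ preserves operator concavity, $S(A\vert B)$ is operator concave in $A$. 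The one point needing care is that $U_C$ applied to an operator-concave-valued expression preserves the inequality $f((1-\lambda)X+\lambda Y)\ge(1-\lambda)f(X)+\lambda f(Y)$; this follows from linearity of $U_C$ and positivity of $U_C$ on the positive cone (Proposition \ref{3inv}(1)).

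For (iv), the transformer equality, I would compute directly from \eqref{jroei}. Using Proposition \ref{3inv}(2), $\{CAC\}^{-1}=\{C^{-1}A^{-1}C^{-1}\}$ and similarly for $B$, so $(1-t)\{CAC\}^{-1}+t\{CBC\}^{-1}=U_{C^{-1}}\big((1-t)A^{-1}+tB^{-1}\big)$; inverting and using Proposition \ref{3inv}(2)–(3) gives $\{CAC\}!_t\{CBC\}=U_C\big(A!_tB\big)$. Then $\{CAC\}!_t\{CBC\}-\{CAC\}=U_C(A!_tB-A)$ by linearity \eqref{LJI}, and pulling the bounded linear map $U_C$ out of the integral yields $S(\{CAC\}\vert\{CBC\})=U_C\!\left(\int_0^1\frac{A!_tB-A}{t}\,dt\right)=\{C\,S(A\vert B)\,C\}$. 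The main obstacle throughout is not any single deep step but rather verifying carefully that the $U$-operator identities (the "fundamental identity" $U_{U_AB}=U_AU_BU_A$ and $U_{A^{-1}}=U_A^{-1}$) justify the manipulations $\{A^{-1/2}[(1-t)I+t\{A^{1/2}B^{-1}A^{1/2}\}]A^{-1/2}\}^{-1}=[(1-t)A^{-1}+tB^{-1}]^{-1}$ and its $C$-conjugated analogue — these are the nonassociative replacements for trivial matrix algebra, and they must be invoked from Section 2 / \cite{Alfsen03Jordan} rather than taken for granted.
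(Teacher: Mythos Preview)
Your proof is correct, and for parts (i), (iv), and the concavity-in-$A$ half of (iii) it matches the paper's argument essentially verbatim. For (ii) and the concavity-in-$B$ half of (iii), however, you take a different route: you work through the integral representation \eqref{jroei} and appeal to the antimonotonicity of inversion (for (ii)) and the operator concavity of the weighted harmonic mean (for (iii)), whereas the paper argues directly from the defining formula \eqref{jbroe}, invoking the operator monotonicity of $\log$ for (ii) and the operator concavity of $\log$ for concavity in $B$ (both cited from \cite{Wang2020b}). The paper's approach for these two items is slightly more economical --- it avoids the detour through the integral and needs only the basic facts about $\log$ --- while your approach has the virtue of running all four parts through a single tool (the integral formula plus elementary properties of $!_t$), giving a more uniform argument. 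Both are valid; the only caveat is that your appeal to the operator concavity of $B\mapsto A!_tB$ in the JB-algebra setting should be backed by a specific citation (presumably from \cite{Wang2020b}), since in the nonassociative context this is not as automatic as for Hilbert-space operators.
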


\begin{proof}
For (i), it follows directly from the definition.

Proof of (ii). If $B\leq C,$ then
\begin{align*}
\left\{ A^{-\frac{1}{2}}BA^{-\frac{1}{2}} \right\}
\leq \left\{A^{-\frac{1}{2}}CA^{-\frac{1}{2}} \right\}.	
\end{align*}
By \cite[Proposition 5]{Wang2020b},
\begin{align*}
\log\left(\{A^{-\frac{1}{2}}BA^{-\frac{1}{2}}\}\right)\leq \log\left(\{A^{-\frac{1}{2}}CA^{-\frac{1}{2}}\}\right).	
\end{align*}
 This implies that $S(A\vert B)\leq S(A\vert C).$
 
 (iii) For any $0\leq t\leq 1,$ we denote 
 $$E=S(A\vert (1-t)B_1+tB_2).$$
 Since $\log x$ is operator concave then 
\begin{align*}
 E
 &=\left\{A^{\frac{1}{2}}\log\left((1-t)\{A^{-\frac{1}{2}}B_1A^{-\frac{1}{2}}\}+t\{A^{-\frac{1}{2}}B_2A^{-\frac{1}{2}}\}\right)A^{\frac{1}{2}}\right\}\\
 &\geq (1-t)\left\{A^{\frac{1}{2}}\log\left(\{A^{-\frac{1}{2}}B_1A^{-\frac{1}{2}}\}\right)A^{\frac{1}{2}}\right\}+t\left\{A^{\frac{1}{2}}\log\left(\{A^{-\frac{1}{2}}B_2A^{-\frac{1}{2}}\}\right)A^{\frac{1}{2}}\right\}\\
 &=(1-t)S(A\vert B_1)+tS(A\vert B_2)
 \end{align*}
 
On the other hand, we denote
\begin{align*}
F&=\left[-\{B^{-\frac{1}{2}}((1-t)A_1+tA_2)B^{-\frac{1}{2}}\}\circ \log(\{B^{-\frac{1}{2}}((1-t)A_1+tA_2)B^{-\frac{1}{2}}\})\right]\\
G&=S((1-t)A_1+tA_2\vert B)	
\end{align*}
By Proposition \ref{xlogx}, 
\begin{align*}
F&\geq (1-t)
\left [-\{B^{-\frac{1}{2}}A_1B^{-\frac{1}{2}}\}\circ \log(\{B^{-\frac{1}{2}}A_1B^{-\frac{1}{2}}\})\right]\\
&\quad+t\left[-\{B^{-\frac{1}{2}}A_2B^{-\frac{1}{2}}\}\circ \log(\{B^{-\frac{1}{2}}A_2B^{-\frac{1}{2}}\})\right]
\end{align*}
From Propsotion \ref{jroexlogx}, one sees that
\begin{align*}
G&=\left\{ B^{\frac{1}{2}} FB^{\frac{1}{2}} \right\}\\
&\geq (1-t)\left\{B^{\frac{1}{2}}\left [-\{B^{-\frac{1}{2}}A_1B^{-\frac{1}{2}}\}\circ \log(\{B^{-\frac{1}{2}}A_1B^{-\frac{1}{2}}\})\right]B^{\frac{1}{2}}\right\}\\
&\quad +t\left\{B^{\frac{1}{2}}\left [-\{B^{-\frac{1}{2}}A_2B^{-\frac{1}{2}}\}\circ \log(\{B^{-\frac{1}{2}}A_2B^{-\frac{1}{2}}\})\right]B^{\frac{1}{2}}\right\}\\
&= (1-t)S(A_1\vert B)+tS(A_2\vert B)	
\end{align*}

Proof of (iv). 
By definition of harmonic mean in \cite{Wang2020b} and Proposition \ref{3inv}, 
for any invertible element $C$ in $\Al$
\begin{align}
\{CAC\}!_t \{CABC\}&=\left[(1-t)\{CAC\}^{-1}+t\{CBC\}^{-1}\right]^{-1}\nonumber\\
&=\left\{C^{-1}[(1-t)A^{-1}+tB^{-1}]C^{-1}\right\}^{-1}\nonumber	\\
&=\left\{C[(1-t)A^{-1}+tB^{-1}]^{-1}C\right\}\nonumber\\
&=\{C (A!_tB) C\}\label{amm}
\end{align}
According to Theorem \ref{tjroei} and (\ref{amm})
\begin{align*}
S(\{CAC\}\vert \{CBC\})
&=\int_0^1 \dfrac{\{C (A!_tB) C\}-\{CAC\}}{t}dt\\
&=\left\{C\int_0^1 \dfrac{\{(A!_tB-A) \}}{t}dt~C\right\}\\
&=\{C S(A\vert B) C\}.	
\end{align*}
\end{proof}

\begin{proposition}
	\label{joconcav}
	Let $\Al$ be   a JC-algebra. The relative operator entropy 
	$S(A \vert B)$ is jointly operator concave. 	
\end{proposition}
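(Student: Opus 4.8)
The plan is to transport the problem to Hilbert-space operators, where joint operator concavity of the relative operator entropy is available, and then push the resulting inequality back into $\Al$.

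First I would use the very definition of a JC-algebra: $\Al$ is a norm-closed Jordan subalgebra of $B(H)_{sa}$ for some Hilbert space $H$ (replacing $\Al$ by its unitization $\Al\oplus\Rdb I$ inside $B(H)_{sa}$ if $\Al$ is not already unital). For positive invertible $A,B\in\Al$ the elements $A^{\pm\frac12}$, $\{A^{-\frac12}BA^{-\frac12}\}=A^{-\frac12}BA^{-\frac12}$ (using that $\{XYX\}=XYX$ in a special Jordan algebra), and $\log(A^{-\frac12}BA^{-\frac12})$ all lie in $\Al$, because a JC-algebra is closed under the continuous functional calculus, under squares, and under Jordan inverses, and each of these operations coincides with the corresponding operation carried out in $B(H)_{sa}$; the spectrum, and hence the notion of positivity, is likewise unchanged. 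Consequently $S(A\vert B)$ as defined by \eqref{jbroe} is literally the Hilbert-space relative operator entropy $A^{\frac12}\log(A^{-\frac12}BA^{-\frac12})A^{\frac12}$, and it is an element of $\Al$.

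Next I would establish joint operator concavity over $B(H)_{sa}$. I would invoke the integral representation of Theorem \ref{tjroei},
\begin{align*}
S(A\vert B)=\int_0^1\frac{A!_tB-A}{t}\,dt,\qquad A!_tB=\bigl((1-t)A^{-1}+tB^{-1}\bigr)^{-1},
\end{align*}
together with the classical fact that the weighted harmonic mean (parallel sum) $(A,B)\mapsto A!_tB$ is jointly operator concave on pairs of positive invertible Hilbert-space operators; this rests in turn on the joint operator convexity of $(A,X)\mapsto X^{*}A^{-1}X$ via a $2\times2$ operator-matrix argument — precisely the tool that is not available in a general JB-algebra and that forces the JC-hypothesis here. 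For each fixed $t\in(0,1]$ the integrand $\frac1t(A!_tB)-\frac1tA$ is then jointly operator concave, being the sum of a concave term and an affine term; and joint operator concavity is inherited by the norm-convergent integral $\int_0^1(\cdot)\,dt$ since the integral of a positive operator-valued function is positive. Hence $S(\,\cdot\,\vert\,\cdot\,)$ is jointly operator concave as an inequality among self-adjoint operators on $H$. (Alternatively one could read $S(A\vert B)$ as a perspective of $\log$ in the sense of \cite{Ebadian11perspective} and quote joint concavity of such perspectives on $B(H)$, but the route through Theorem \ref{tjroei} is more self-contained.)

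Finally, since the order on $\Al$ is the restriction of the order on $B(H)_{sa}$ and all three operators $S((1-t)A_1+tA_2\vert(1-t)B_1+tB_2)$, $S(A_1\vert B_1)$, $S(A_2\vert B_2)$ belong to $\Al$, the operator inequality
\begin{align*}
S\bigl((1-t)A_1+tA_2\,\big\vert\,(1-t)B_1+tB_2\bigr)\ \geq\ (1-t)\,S(A_1\vert B_1)+t\,S(A_2\vert B_2)
\end{align*}
established in $B(H)_{sa}$ holds verbatim in $\Al$, which is the assertion. The only genuine work is the bookkeeping of the reduction step together with the (standard) Hilbert-space joint concavity of the harmonic mean; I expect the reduction — verifying that functional calculus, Jordan inverse, and $U_A$ computed in $\Al$ agree with their $B(H)_{sa}$ counterparts — to be the point that needs the most care, and the reason the statement is stated for JC- rather than JB-algebras.
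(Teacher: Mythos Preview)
Your proposal is correct; both your argument and the paper's hinge on the same first step, namely realizing the JC-algebra inside $B(H)_{sa}$ and observing that $S(A\vert B)$ computed in $\Al$ coincides with the Hilbert-space relative operator entropy. The difference lies only in how joint concavity is then verified on $B(H)_{sa}$. The paper takes precisely the shortcut you mention in passing: it identifies $S(A\vert B)=P_{f\triangle h}(B,A)$ with $f(x)=\log x$ and $h(x)=x$, notes that both are operator concave, and invokes \cite[Corollary~2.6]{Ebadian11perspective} directly. Your route instead goes through the integral representation of Theorem~\ref{tjroei} and the joint operator concavity of the weighted harmonic mean $(A,B)\mapsto A!_tB$, integrating the resulting pointwise inequality. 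Your approach is more self-contained relative to this paper (it reuses Theorem~\ref{tjroei} rather than an external black box), at the cost of needing the classical $2\times2$ matrix argument for concavity of the parallel sum; the paper's approach is a one-line citation. Either way the essential content is the same, and your remark that the JC-hypothesis is what makes the Hilbert-space machinery available is exactly the point.
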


\begin{proof}
A JC-algebra can be realized as self-adjoint operators on a Hilbert space. 
Since $f(x)=\log x$ and $h(x)=x$ are operator concave, then by \cite[Corollary 2.6]{Ebadian11perspective}, the relative operator entropy 
$S(A \vert B)=P_{f\triangle h}(B,A)$ is jointly operator concave for operators $A$ and $B$ on a Hilbert space. 	   
\end{proof}

\begin{theorem}\label{Ptroeint}
Let $A, B$ be two positive invertible elements in a unital JB-algebra $\Al.$ For any $\lambda \in (0, 1),$
\begin{align}\label{troeint}
T_{\lambda}(A\vert B)=\dfrac{\sin(\lambda \pi)}{\lambda\pi}\int_0^1 \left(\dfrac{t}{1-t}\right)^{\lambda}\frac{A!_t B -A}{t}dt.	
\end{align}	
\end{theorem}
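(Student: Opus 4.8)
The plan is to mirror the proofs of Theorem \ref{tjroei} and Proposition \ref{jroexlogx}: start from a scalar integral representation of the function $x^{\lambda}$ on $(0,\infty)$, apply the JB-algebra functional calculus, and then conjugate by $A^{\frac{1}{2}}$ using the $U$-operator machinery and Proposition \ref{3inv} to convert powers of $\{A^{\frac12}B^{-1}A^{\frac12}\}$ into harmonic means $A!_t B$. Concretely, the first step is to recall (or derive, as in \cite{Wang2020b}) the classical formula
\begin{align*}
x^{\lambda} = \frac{\sin(\lambda\pi)}{\pi}\int_0^1 \left(\frac{t}{1-t}\right)^{\lambda}\frac{x}{(1-t)+tx}\,dt \qquad (x>0,\ \lambda\in(0,1)),
\end{align*}
which is the standard Stieltjes-type representation of the fractional power; equivalently, after the substitution it gives $\frac{x^\lambda - 1}{\lambda}$ once one subtracts the $x=1$ value (note $\frac{\sin(\lambda\pi)}{\pi}\int_0^1 (t/(1-t))^\lambda \frac{dt}{(1-t)+t} = 1$).

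Next I would rewrite the integrand so the $A^{\frac12}$-conjugation can be carried through exactly as in the two earlier proofs. Writing $X=\{A^{-\frac12}BA^{-\frac12}\}$, whose Jordan inverse is $X^{-1}=\{A^{\frac12}B^{-1}A^{\frac12}\}$ by Proposition \ref{3inv}(2), functional calculus applied to the scalar identity gives
\begin{align*}
X^{\lambda}-I = \frac{\sin(\lambda\pi)}{\pi}\int_0^1 \left(\frac{t}{1-t}\right)^{\lambda}\left(\bigl[(1-t)I + tX^{-1}\bigr]^{-1} - I\right)\frac{dt}{?}
\end{align*}
— here one must be a little careful to land on the same kernel $\frac{[(1-t)I+t\{A^{\frac12}B^{-1}A^{\frac12}\}]^{-1}-I}{t}$ that appeared in (\ref{jlogi1}), so that applying $U_{A^{\frac12}}$ and invoking Proposition \ref{3inv}(2)–(3) turns $\{A^{\frac12}[(1-t)I+tX^{-1}]A^{\frac12}\}^{-1}$ into $[(1-t)A^{-1}+tB^{-1}]^{-1} = A!_t B$ and $\{A^{\frac12}IA^{\frac12}\}=A$. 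Dividing by $\lambda$ and using $A\#_\lambda B = \{A^{\frac12}X^\lambda A^{\frac12}\}$ then yields $T_\lambda(A\vert B) = \frac{A\#_\lambda B - A}{\lambda}$ equal to the claimed integral. The linearity of $U_{A^{\frac12}}$ (equation (\ref{LJI})) and the fact that it commutes with the (norm-convergent) integral — already used implicitly in the proof of Theorem \ref{tjroei} — justify moving $U_{A^{\frac12}}$ inside.

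The main obstacle is purely the bookkeeping of the scalar identity: one needs the precise form of the integral representation of $x^\lambda$ (or of $\frac{x^\lambda-1}{\lambda}$) whose kernel, after the change of variables, matches $\frac{t}{1-t}$ times $\frac{(1-t+t x^{-1})^{-1}-1}{t}$ with the normalizing constant $\frac{\sin(\lambda\pi)}{\lambda\pi}$. I would verify the scalar identity directly: substitute $s = t/(1-t)$ to reduce $\int_0^1 (t/(1-t))^\lambda \frac{x}{(1-t)+tx}\,\frac{dt}{t(1-t)}$ or the appropriate variant to a Beta-function integral $\int_0^\infty \frac{s^\lambda}{s+x^{-1}}\,ds$ type expression, and evaluate using $\int_0^\infty \frac{s^{\lambda}}{s+c}\,ds = \frac{\pi}{\sin(\lambda\pi)}c^{\lambda}$ for $0<\lambda<1$, $c>0$. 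Once the scalar formula is pinned down with the correct kernel $\frac{A!_tB - A}{t}$ appearing after conjugation (exactly as in Theorem \ref{tjroei}), the rest is a line-by-line repetition of the manipulations in the proofs above, with no new JB-algebraic input beyond Proposition \ref{3inv}.
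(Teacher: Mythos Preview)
Your proposal is correct and follows essentially the same route as the paper. The paper's argument is shorter only because it quotes the integral representation $A\#_{\lambda} B=\dfrac{\sin(\lambda \pi)}{\pi}\displaystyle\int_0^1 \dfrac{t^{\lambda -1}}{(1-t)^{\lambda}}(A!_t B)\,dt$ from \cite[Theorem~2]{Wang2020b} together with the Beta identity $\dfrac{\sin(\lambda\pi)}{\pi}\displaystyle\int_0^1 t^{\lambda-1}(1-t)^{-\lambda}\,dt=1$, then subtracts and divides by $\lambda$; your scalar-identity-plus-$U_{A^{1/2}}$-conjugation argument is exactly how that cited theorem is proved, so you are inlining it rather than citing it.
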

\begin{proof}
By 	\cite[Theorem 2]{Wang2020b}, 
\begin{align}\label{geomint}
A\#_{\lambda} B=\dfrac{\sin(\lambda \pi)}{\pi} \int_0^1 \dfrac{t^{\lambda -1}}{(1-t)^{\lambda}}(A!_t B)dt.	
\end{align}
Applying functional calculus in JB-algebras to the following identity of $\Gamma$-function,  
\begin{align}\label{indint}
\dfrac{\sin(\lambda \pi)}{\pi} \int_0^1 \dfrac{t^{\lambda -1}}{(1-t)^{\lambda}}dt=1,
 \end{align}
 we have
 \begin{align}\label{JBind}
\dfrac{\sin(\lambda \pi)}{\pi} \int_0^1 \dfrac{t^{\lambda -1}}{(1-t)^{\lambda}} Adt=A.	
 \end{align}

 Combining (\ref{geomint}) and (\ref{JBind}),
 \begin{align}
 \frac{A\#_{\lambda} B-A	}{\lambda}=\dfrac{\sin(\lambda \pi)}{\lambda\pi}\int_0^1 \left(\dfrac{t}{1-t}\right)^{\lambda}\frac{A!_t B -A}{t}dt.
 \end{align}
\end{proof}
\begin{proposition}
The Tsallis relative operator entropy $ T_{\lambda}(A\vert B)$ defined in $\Al$ has the following properties: 
\begin{enumerate}
\item[(i)] $T_{\lambda}(\alpha A\vert \alpha B)=\alpha T_{\lambda}(A\vert B)$ for any positive number $\alpha$
\item[(ii)] If $B\leq D,$ then $ T_{\lambda}(A\vert B)\leq  T_{\lambda}(A\vert D).$ 
\item[(iii)] $T_{\lambda}(\{C AC\}\vert \{C BC\})=\{C T_{\lambda}(A\vert B) C\}$ for any invertible $C$ in $\Al.$ 
\item[(iv)]  $T_{\lambda}(A\vert B)$ is operator concave with respect to $A,$ $B$ individually. 
\item[(v)] $\lim\limits_{\lambda \to 0}T_{\lambda}(A\vert B)=S(A\vert B).$	
\end{enumerate}
\end{proposition}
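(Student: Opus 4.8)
The plan is to transport, via the identity $T_\lambda(A\vert B)=\lambda^{-1}(A\#_\lambda B-A)$, the basic structural properties of the weighted geometric mean $\#_\lambda$ from \cite{Wang2020b}, using only the linearity and positivity of the quadratic maps $U_C$ recorded in Proposition \ref{3inv} and \eqref{LJI}. The one useful preliminary reduction is that, since $\{A^{1/2}IA^{1/2}\}=A$, linearity \eqref{LJI} of $U_{A^{1/2}}$ gives
\[
T_\lambda(A\vert B)=\frac{1}{\lambda}\,U_{A^{1/2}}\!\left(\{A^{-1/2}BA^{-1/2}\}^{\lambda}-I\right)=\left\{A^{1/2}\,g_\lambda\!\left(\{A^{-1/2}BA^{-1/2}\}\right)A^{1/2}\right\},
\]
where $g_\lambda(x)=(x^{\lambda}-1)/\lambda$ on $(0,\infty)$. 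Items (i)--(iv) are then essentially formal, while (v) is the one requiring a genuine analytic step.

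For (i), the homogeneity $\alpha A\#_\lambda\alpha B=\alpha(A\#_\lambda B)$ --- immediate from the definition of $\#_\lambda$ --- gives $T_\lambda(\alpha A\vert\alpha B)=\lambda^{-1}\bigl(\alpha(A\#_\lambda B)-\alpha A\bigr)=\alpha\,T_\lambda(A\vert B)$. For (ii), if $B\le D$ then $\{A^{-1/2}BA^{-1/2}\}\le\{A^{-1/2}DA^{-1/2}\}$ by Proposition \ref{3inv}(1) applied to $D-B\ge0$ together with \eqref{LJI}; as $x\mapsto x^{\lambda}$ is operator monotone on $(0,\infty)$ for $\lambda\in(0,1)$ (from \cite{Wang2020b}, or from the standard integral representation of $x^{\lambda}$ as a positive combination of the functions $x\mapsto x/(x+s)$, whose operator monotonicity is implicit in ``(4.5) in \cite{Wang2020b}''), applying the positive linear map $U_{A^{1/2}}$ yields $A\#_\lambda B\le A\#_\lambda D$, hence $T_\lambda(A\vert B)\le T_\lambda(A\vert D)$. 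For (iii), the transformer equality $\{CAC\}\#_\lambda\{CBC\}=\{C(A\#_\lambda B)C\}$ for invertible $C$ --- from \cite{Wang2020b}, or proved by the same computation as \eqref{amm} --- combined with linearity \eqref{LJI} of $U_C$ gives $T_\lambda(\{CAC\}\vert\{CBC\})=\lambda^{-1}\bigl(\{C(A\#_\lambda B)C\}-\{CAC\}\bigr)=\{C\,T_\lambda(A\vert B)\,C\}$. (Both (ii) and (iii) may alternatively be read off Theorem \ref{Ptroeint} together with the monotonicity of the harmonic mean and \eqref{amm}.)

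For (iv), concavity in $B$ is read directly off the displayed formula: $B\mapsto\{A^{-1/2}BA^{-1/2}\}$ is affine, $g_\lambda$ is operator concave on $(0,\infty)$ since $x\mapsto x^{\lambda}$ is (for $\lambda\in(0,1)$, from \cite{Wang2020b} or the integral representation), and $U_{A^{1/2}}$ is positive and linear, so the composite is operator concave. For concavity in $A$, use the symmetry $A\#_\lambda B=B\#_{1-\lambda}A$ of the weighted geometric mean together with the identity $A=\{B^{1/2}\{B^{-1/2}AB^{-1/2}\}B^{1/2}\}$ (Proposition \ref{3inv}(3)) to rewrite $T_\lambda(A\vert B)=\{B^{1/2}\,h_\lambda(\{B^{-1/2}AB^{-1/2}\})\,B^{1/2}\}$ with $h_\lambda(y)=(y^{1-\lambda}-y)/\lambda$; since $1-\lambda\in(0,1)$, $h_\lambda$ is a positive multiple of the operator concave function $y^{1-\lambda}$ minus a linear term, hence operator concave, and one concludes as before. (Alternatively, invoke the individual operator concavity of $\#_\lambda$ from \cite{Wang2020b} directly.)

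The substantive point is (v). From the displayed formula, $T_\lambda(A\vert B)=U_{A^{1/2}}\bigl(g_\lambda(X)\bigr)$ with $X:=\{A^{-1/2}BA^{-1/2}\}$ a fixed positive invertible element, so it suffices to show $g_\lambda(X)\to\log X$ in norm as $\lambda\to0^{+}$ and then apply the bounded linear map $U_{A^{1/2}}$, which sends the limit to $\{A^{1/2}\log(\{A^{-1/2}BA^{-1/2}\})A^{1/2}\}=S(A\vert B)$. Since $X$ is positive invertible, $\Sp{X}\subseteq[\delta,M]$ for some $0<\delta\le M<\infty$; writing $y=e^{u}$ with $u\in[\log\delta,\log M]$ one has $g_\lambda(y)=u\cdot\frac{e^{\lambda u}-1}{\lambda u}\to u=\log y$ uniformly on $[\delta,M]$ as $\lambda\to0^{+}$, because $\frac{e^{s}-1}{s}\to1$ uniformly for $s$ in bounded subsets of $\Rdb$. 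As the continuous functional calculus in JB-algebras is isometric, $\norm{g_\lambda(X)-\log X}=\sup_{y\in\Sp{X}}\abs{g_\lambda(y)-\log y}\to0$, which completes the argument. The only non-formal ingredients are thus this uniform estimate and the norm-continuity (isometry) of the functional calculus; the remaining ``obstacle'' is simply to make sure that operator monotonicity and operator concavity of $x\mapsto x^{\lambda}$ on $(0,\infty)$ for $\lambda\in(0,1)$, and the homogeneity, transformer equality, and symmetry of $\#_\lambda$, are cited from \cite{Wang2020b} in exactly the forms used above.
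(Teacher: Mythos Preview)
Your proposal is correct and follows essentially the same approach as the paper: items (i)--(iv) are reduced to the corresponding properties of $\#_\lambda$ from \cite{Wang2020b} (Proposition 6 there), and (v) is proved by showing uniform convergence of $g_\lambda(x)=(x^\lambda-1)/\lambda$ to $\log x$ on compact subsets of $(0,\infty)$ and then invoking continuity of $U_{A^{1/2}}$. The only cosmetic differences are that the paper cites Dini's theorem for the uniform convergence in (v) whereas you give a direct estimate via the substitution $y=e^{u}$, and in (iv) you supply an explicit argument through $h_\lambda(y)=(y^{1-\lambda}-y)/\lambda$ in addition to the citation that the paper uses alone.
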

\begin{proof}
By \cite[Proposition 6(i)]{Wang2020b}, $(\alpha A)\#_{\lambda} (\alpha B)=\alpha (A\#_{\lambda} B).$ 
Then (i) follows immediately.

For (ii), it follows from \cite[Proposition 6(ii)]{Wang2020b} that 
\begin{align*}
A\#_{\lambda} B-A\leq A\#_{\lambda} D-A.
\end{align*}
Then, $ T_{\lambda}(A\vert B)\leq  T_{\lambda}(A\vert D).$

(iii) According to \cite[Proposition 6(iv)]{Wang2020b},
 \begin{align*}
T_{\lambda}(\{C AC\}\vert \{C BC\})&=\dfrac{\{C AC\}\#_{\lambda}\{C BC\}-\{C AC\}}{\lambda}	\\
&=\dfrac{\{C (A\#_{\lambda} B) C\}-\{C AC\}}{\lambda}\\
&=\{C T_{\lambda}(A\vert B) C\}.
\end{align*}

Proof of (iv), it follows from the fact $A\#_{\lambda} B$ is operator concave with respect to $A,$ $B$ individually (See e.g. \cite[Proposition 6(iii)]{Wang2020b}).

(v) Denote $\ln_{\lambda}x= \frac{x^{\lambda}-1}{\lambda}.$ By Dini's theorem, $\ln_{\lambda}x$ uniformly converges to $\log x$ on any bounded closed interval $[a, b]\subset [0,\infty).$ 
It implies that 
\begin{align*}
\lim\limits_{\lambda \to 0}\ln_{\lambda}(\{A^{-\frac{1}{2}}BA^{-\frac{1}{2}}\})=\lim\limits_{\lambda \to 0}\frac{\{A^{-\frac{1}{2}}BA^{-\frac{1}{2}}\}^{\lambda}-1}{\lambda }=\log\left(\left\{A^{-\frac{1}{2}}BA^{-\frac{1}{2}}\right\}\right)	
\end{align*}
Since $U_{A^{\frac{1}{2}}}$ is continuous, then
\begin{align*}
\lim\limits_{\lambda \to 0}T_{\lambda}(A\vert B)
&=\left\{A^{\frac{1}{2}}\log\left(\left\{A^{-\frac{1}{2}}BA^{-\frac{1}{2}}\right\}\right)A^{\frac{1}{2}}\right\}=S(A\vert B).
\end{align*}
\end{proof}
Similar argument as in Proposition \ref{joconcav} gives
\begin{proposition}
	\label{joconcav1}
	Let $\Al$ be   a JC-algebra. The Tsallis relative operator entropy 
	$T_{\lambda}(A \vert B)$ is jointly operator concave. 	
\end{proposition}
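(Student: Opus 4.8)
The plan is to mimic the proof of Proposition~\ref{joconcav}: realize $T_{\lambda}(A\vert B)$ as the ordinary (associative) perspective of two operator concave functions on a Hilbert space and then invoke \cite[Corollary 2.6]{Ebadian11perspective}.

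First I would recall that a JC-algebra $\Al$ may be taken to be a norm-closed Jordan subalgebra of $B(H)_{sa}$; for $A,B\in\Al$ one then has $\{XYX\}=XYX$, the JB-functional calculus agrees with the ordinary one, and the order on $\Al$ is the one inherited from $B(H)_{sa}$. Consequently $A\#_{\lambda}B=A^{1/2}(A^{-1/2}BA^{-1/2})^{\lambda}A^{1/2}$, the nonassociative perspective \eqref{dnapf} coincides with the classical perspective $P_{f\triangle h}$ of \cite{Ebadian11perspective}, and joint operator concavity of a map on $B(H)_{sa}$ restricts to joint operator concavity of its restriction to $\Al$ (the relevant convex combinations and images all lie in $\Al$).

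Next, put $h(x)=x$ and $f(x)=(x^{\lambda}-1)/\lambda$ on $(0,\infty)$. A direct computation (entirely parallel to the displayed identity for $S$ in Proposition~\ref{joconcav}) gives $P_{f\triangle h}(B,A)=\dfrac{A\#_{\lambda}B-A}{\lambda}=T_{\lambda}(A\vert B)$. It then remains to verify that $f$ and $h$ are operator concave on $(0,\infty)$: $h$ is affine and hence operator concave, while for $0<\lambda<1$ the power $x\mapsto x^{\lambda}$ is operator concave, so $f=\tfrac1\lambda x^{\lambda}-\tfrac1\lambda$ (with $\tfrac1\lambda>0$) is operator concave as well. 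Applying \cite[Corollary 2.6]{Ebadian11perspective} shows that $P_{f\triangle h}$ is jointly operator concave on $B(H)_{sa}$, whence $T_{\lambda}(A\vert B)$ is jointly operator concave on $\Al$.

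I do not anticipate a genuine obstacle; the only point requiring a little care is the operator concavity of $(x^{\lambda}-1)/\lambda$ for $\lambda\in(0,1)$ (the algebraic identity $P_{f\triangle h}(B,A)=T_{\lambda}(A\vert B)$ is immediate). An alternative but slightly longer route would start from the integral representation in Theorem~\ref{Ptroeint}: each integrand $(A!_tB-A)/t$ is jointly operator concave because the harmonic mean is a Kubo--Ando operator mean, and averaging against the positive kernel $\tfrac{\sin(\lambda\pi)}{\lambda\pi}(t/(1-t))^{\lambda}$ preserves joint concavity; since this also relies on the Hilbert space realization, the perspective-function argument is the more economical one.
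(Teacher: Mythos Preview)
Your proposal is correct and follows exactly the approach the paper indicates: the paper simply says ``Similar argument as in Proposition~\ref{joconcav} gives'' before stating the proposition, and your argument is precisely that similar argument with $f(x)=(x^{\lambda}-1)/\lambda$ in place of $\log x$. The only minor omission is the boundary case $\lambda=1$, where $T_1(A\vert B)=B-A$ is affine and hence trivially jointly concave.
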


\section{Upper and lower bounds of generalized relative operator entropies and Tsallis relative operator entropies}

\begin{definition}
Let $A, B$ be positive invertible elements in a unital JB-algebra $\Al$.
The {\bf relative operator $(\alpha, \beta)$-entropy} $S_{\alpha,\beta}(A|B)$ and {\bf Tsallis relative operator $(\lambda, \beta)$-entropy} $T_{\lambda,\beta}(A\vert B)$ 
are defined respectively by 
\begin{align}\label{groe2}
S_{\alpha,\beta}(A|B)&:=
\left\{A^{\frac{\beta}{2}}\left[\{A^{-\frac{\beta}{2}}BA^{-\frac{\beta}{2}}\}^{\alpha}
\circ \log\left(\{A^{-\frac{\beta}{2}}BA^{-\frac{\beta}{2}}\}\right)\right]
A^{\frac{\beta}{2}}\right\} ,	\\
T_{\lambda,\beta}(A\vert B)&:=\left\{A^{\frac{\beta}{2}}
\ln_{\lambda}\left(\left\{A^{-\frac{\beta}{2}}BA^{-\frac{\beta}{2}}\right\}\right)
A^{\frac{\beta}{2}}\right\}	.
\end{align}	
\end{definition}

Clearly, $S_{0, 1}(A|B)=S(A|B)$ and $T_{\lambda,1}(A\vert B)=T_{\lambda}(A\vert B).$  

\medskip

In this section, we study the bounds of  
$S_{\alpha,\beta}(A|B)$ and $T_{\lambda,\beta}(A\vert B)$ 
in the setting of JB-algebras. For real numbers $\alpha\geq 0$ and $\beta>0,$ we set
 
\begin{align}
\label{I}
{\rm I}&=2\left\{A^{\frac{\beta}{2}}
\left[\left(1-2(1+\{A^{-\frac{\beta}{2}}B A^{-\frac{\beta}{2}}\})^{-1}\right)\circ\{A^{-\frac{\beta}{2}}B A^{-\frac{\beta}{2}}\}^{\alpha}\right]A^{\frac{\beta}{2}}\right\}\\
\label{II}
{\rm II}&=4A\#_{(\alpha, \beta)}B-8\left\{A^{\frac{\beta}{2}}\left[\{A^{-\frac{\beta}{2}}B A^{-\frac{\beta}{2}}\}^{\alpha}\circ ( \{A^{-\frac{\beta}{2}}B A^{-\frac{\beta}{2}}\}^{\frac{1}{2}}+1   )^{-1}\right]A^{\frac{\beta}{2}}\right\}\\
\label{III}
{\rm III}&=A\#_{(\alpha+\frac{1}{2}, \beta)}B-A\#_{(\alpha-\frac{1}{2}, \beta)}B\\
\label{V}
{\rm V}&=\frac{1}{2}(A\#_{(\alpha+1, \beta)}B-A\#_{(\alpha-1, \beta)}B) 		
\end{align}
where $A\#_{(\alpha, \beta)}B: =\left\{A^{\frac{\beta}{2}}\left(\{A^{-\frac{\beta}{2}}BA^{-\frac{\beta}{2}}\}\right)^{\alpha }A^{\frac{\beta}{2}}\right\}$ is the {\bf operator $(\alpha, \beta)$-geometric mean}. Then ${\rm I}, {\rm II}, {\rm III}$, and ${\rm V}$ defined above are in $\Al$.

 For convenience of the reader, we recall  here Theorem 1 from  \cite{Wang2020b}:
 
\begin{theorem}
	\label{tnapf} 
	Let $\Al$ be a unital JB-algebra.	
	Let $r, q$ and $h$ be real valued continuous functions on a closed interval $\Idb$ such that $h>0$ 
	and $r(x)\leq q(x)$. For elements $A$ 
	and $B$ in $\Al$ such that the spectra of $B$ and $\{h(B)^{-1/2}Ah(B)^{-1/2}\}$ are contained in $\Idb$, 
	\begin{equation}\label{inapf} 
		P_{r\triangle h}(A,B)\leq P_{q\triangle h}(A, B).
	\end{equation}
\end{theorem}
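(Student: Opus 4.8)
The plan is to reduce the inequality (\ref{inapf}) to the positivity of the linear map $U_{h(B)^{1/2}}$ from Proposition~\ref{3inv}(1), combined with the order-preserving property of the functional calculus in JB-algebras. Since $h>0$ on $\Idb$ and $\Sp{B}\subseteq\Idb$, the element $h(B)$ is positive and invertible, so $h(B)^{1/2}$ and $h(B)^{-1/2}$ are well-defined positive invertible elements of $\Al$. Put
\[
C:=\left\{h(B)^{-\frac12}Ah(B)^{-\frac12}\right\}\in\Al ,
\]
which by hypothesis satisfies $\Sp{C}\subseteq\Idb$; hence $r(C)$ and $q(C)$ are defined by functional calculus (see \cite[Proposition 1.21]{Alfsen03Jordan}), and unwinding the definition (\ref{dnapf}) together with (\ref{JI}) gives $P_{r\triangle h}(A,B)=U_{h(B)^{1/2}}\bigl(r(C)\bigr)$ and $P_{q\triangle h}(A,B)=U_{h(B)^{1/2}}\bigl(q(C)\bigr)$.

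Next I would use that, by the single-element functional calculus in JB-algebras, the assignment $f\mapsto f(C)$ is linear and order-preserving (the JB-subalgebra generated by $C$ and the unit is associative and commutative, isomorphic to an algebra of continuous functions on $\Sp{C}$). Consequently $q(C)-r(C)=(q-r)(C)$, and since $q-r\geq 0$ on $\Idb\supseteq\Sp{C}$, we conclude $q(C)-r(C)=(q-r)(C)\geq 0$.

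Finally, Proposition~\ref{3inv}(1) applied (with $h(B)^{1/2}$ in place of $A$) to the positive element $q(C)-r(C)$ yields $U_{h(B)^{1/2}}\bigl(q(C)-r(C)\bigr)\geq 0$, and since $U_{h(B)^{1/2}}$ is linear (see (\ref{LJI})),
\[
P_{q\triangle h}(A,B)-P_{r\triangle h}(A,B)=U_{h(B)^{1/2}}\bigl(q(C)\bigr)-U_{h(B)^{1/2}}\bigl(r(C)\bigr)=U_{h(B)^{1/2}}\bigl(q(C)-r(C)\bigr)\geq 0 ,
\]
which is exactly (\ref{inapf}). I do not expect a genuine obstacle here: the whole argument is that $U_{h(B)^{1/2}}$ is a positive linear map composed with the order-preservation of functional calculus, and the only points deserving a sentence of justification are that $h(B)^{\pm 1/2}$ make sense (immediate from $h>0$ and $\Sp{B}\subseteq\Idb$) and that $g\geq 0$ on $\Sp{C}$ forces $g(C)\geq 0$, a standard property of the JB-algebra functional calculus.
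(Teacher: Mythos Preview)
Your proof is correct. The paper does not actually prove this theorem here; it is quoted verbatim as Theorem~1 of \cite{Wang2020b} for the reader's convenience, so there is no in-paper argument to compare against. That said, your argument---reducing to the positivity and linearity of $U_{h(B)^{1/2}}$ (Proposition~\ref{3inv}(1) and (\ref{LJI})) together with the order-preservation of the single-variable functional calculus---is exactly the natural one and is presumably what the cited reference does as well.
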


We extend \cite[Proposition 2.5]{Wang2020a} to the setting of JB-algebras. 
\begin{proposition}\label{orlurnp} 
Let $A$ and $B$ be two positive invertible elements in a unital JB-algebras $\Al$. 
Then for $\alpha\geq 0$ and $\beta>0,$ 
\begin{enumerate}
\item[(i)] $A\#_{(\alpha, \beta)}B-A\#_{(\alpha-1, \beta)}B\leq 
{\rm I}\leq A\#_{(\alpha+1, \beta)}B-A\#_{(\alpha, \beta)}B.$

\item[(ii)] $A\#_{(\alpha, \beta)}B-A\#_{(\alpha-1, \beta)}B\leq 
{\rm V}\leq A\#_{(\alpha+1, \beta)}B-A\#_{(\alpha, \beta)}B.$	

\item[(iii)] $A\#_{(0, \beta)}B-A\#_{(-1, \beta)}B\leq A\#_{(\alpha, \beta)}B-A\#_{(\alpha-1, \beta)}B.$
\end{enumerate}
\end{proposition}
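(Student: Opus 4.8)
The plan is to reduce each inequality in Proposition~\ref{orlurnp} to a scalar functional inequality and then transport it to $\Al$ via the monotonicity of the nonassociative perspective, i.e.\ Theorem~\ref{tnapf}. Throughout I would write $X=\{A^{-\beta/2}BA^{-\beta/2}\}$, which is positive invertible since $A,B$ are, and observe that all four quantities in \eqref{I}--\eqref{V} have the form $\{A^{\beta/2}\,g(X)\,A^{\beta/2}\}$ for an explicit continuous function $g$ on $(0,\infty)$; in the language of \eqref{dnapf} these are $P_{g\triangle h}(B,A)$ with $h(x)=x^{\beta}$ (so that $h(A)=A^{\beta}$ and $\{h(A)^{-1/2}Bh(A)^{-1/2}\}=X$). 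Concretely, $A\#_{(\gamma,\beta)}B$ corresponds to $g(x)=x^{\gamma}$; ${\rm I}$ corresponds to $g_{\rm I}(x)=2\bigl(1-\tfrac{2}{1+x}\bigr)x^{\alpha}=2x^{\alpha}\cdot\tfrac{x-1}{x+1}$; and ${\rm V}$ corresponds to $g_{\rm V}(x)=\tfrac12(x^{\alpha+1}-x^{\alpha-1})$.

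For part (i), the statement then follows from Theorem~\ref{tnapf} once I verify the scalar inequalities
\[
x^{\alpha}-x^{\alpha-1}\ \le\ 2x^{\alpha}\,\frac{x-1}{x+1}\ \le\ x^{\alpha+1}-x^{\alpha}
\qquad(x>0).
\]
Dividing by $x^{\alpha-1}>0$, the left inequality is $x-1\le \tfrac{2x(x-1)}{x+1}$, i.e.\ $(x-1)\bigl(\tfrac{2x}{x+1}-1\bigr)=(x-1)\cdot\tfrac{x-1}{x+1}=\tfrac{(x-1)^2}{x+1}\ge 0$, which is clear; the right inequality is identical after dividing by $x^{\alpha}$. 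Hence $g_{\rm I}-g$ for the two geometric-mean functions has constant sign on $(0,\infty)$, and applying \eqref{inapf} with $h(x)=x^{\beta}$, $A\leftrightarrow B$, $B\leftrightarrow A$ gives (i). Part (ii) is the same pattern: after dividing by $x^{\alpha-1}$ one must check $x-1\le \tfrac12(x^2-1)\le x^2-x$, equivalently $\tfrac{(x-1)^2}{2}\ge 0$ twice, so $g_{\rm V}$ sits between the two geometric-mean functions pointwise and Theorem~\ref{tnapf} finishes it. For part (iii) I would again use Theorem~\ref{tnapf}: it suffices that $x^{0}-x^{-1}\le x^{\alpha}-x^{\alpha-1}$ for all $x>0$ and all $\alpha\ge 0$, i.e.\ that $t\mapsto x^{t}-x^{t-1}=x^{t-1}(x-1)$ is nondecreasing in $t$ for fixed $x>0$; this is immediate since $x^{t-1}$ is nondecreasing in $t$ when $x\ge 1$ and $x-1\ge 0$, while $x^{t-1}$ is nonincreasing in $t$ when $0<x\le 1$ and $x-1\le 0$, so in both cases the product is nondecreasing in $t$. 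Taking $t=0$ and $t=\alpha\ge 0$ yields the pointwise bound, and \eqref{inapf} transports it.

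The only genuine subtlety I anticipate is bookkeeping rather than mathematics: making sure that the perspective bracket in \eqref{dnapf} is applied with the arguments in the correct slots, so that $h$ acts on $A$ (producing the $A^{\beta/2}$ sandwich) and the ``inner variable'' is $X=\{A^{-\beta/2}BA^{-\beta/2}\}$, whose spectrum lies in a compact subset of $(0,\infty)$ on which all the functions $g$ above are continuous --- this is exactly the hypothesis ``the spectra of $B$ and $\{h(B)^{-1/2}Ah(B)^{-1/2}\}$ are contained in $\Idb$'' of Theorem~\ref{tnapf}, here with a slightly generous interval $\Idb=[\delta,\delta^{-1}]$ containing $\Sp{X}$. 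Once that identification is in place, each part is a one-line scalar computation feeding into \eqref{inapf}, and the operator concavity/monotonicity machinery of Section~3 is not needed at all; only Theorem~\ref{tnapf} is used.
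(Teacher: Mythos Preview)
Your proposal is correct and follows essentially the same route as the paper: reduce each operator inequality to a pointwise scalar inequality among continuous functions on $(0,\infty)$, and then invoke Theorem~\ref{tnapf} with $h(x)=x^{\beta}$ to lift these to the corresponding nonassociative perspective expressions. The only cosmetic difference is that you verify the scalar bounds in (i) directly (via $(x-1)^2/(x+1)\ge 0$), whereas the paper cites \cite[Proposition~2.5]{Wang2020a} for that step; your treatments of (ii) and (iii) match the paper's almost verbatim.
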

\begin{proof}
Proof of (i).  
Let 
\begin{align*}
r(x)&= x^{\alpha}-x^{\alpha-1},\\
q(x)&=2\left( 1-\frac{2}{x+1} \right)	x^{\alpha},\\
k(x)&=x^{\alpha+1}-x^{\alpha}.
\end{align*}
Then, 
\begin{equation}
\label{rqk1}
r(x)\leq q(x)\leq k(x)
\end{equation}
for $x>0$ as shown in the proof of  \cite[Proposition 2.5]{Wang2020a}.

Denoting $h(x)=x^{\beta}$,  we have 
\begin{align*}
P_{r\triangle h}(B, A)&=A\#_{(\alpha, \beta)}B-A\#_{(\alpha-1, \beta)}B \\
P_{q\triangle h}(B, A)&
=2\left\{A^{\frac{\beta}{2}}
\left[\left(1-2(1+\{A^{-\frac{\beta}{2}}B A^{-\frac{\beta}{2}}\})^{-1}\right)\circ\{A^{-\frac{\beta}{2}}B A^{-\frac{\beta}{2}}\}^{\alpha}\right]A^{\frac{\beta}{2}}\right\}
={\rm I}\\
P_{k\triangle h}(B, A)&=A\#_{(\alpha+1, \beta)}B-A\#_{(\alpha, \beta)}B.
\end{align*}

Applying Theorem \ref{tnapf} to the inequalities (\ref{rqk1}), we obtain desired inequalities. 

For (ii), the inequalities 
\begin{align} \label{rqk2}
 x^{\alpha}-x^{\alpha-1}\leq \frac{1}{2}(x^{\alpha+1}-x^{\alpha-1})\leq  x^{\alpha+1}-x^{\alpha}
\end{align}
hold for all $x>0.$ 
Using the perspective functions associated with the three functions in (\ref{rqk2}) with $h$ as in 
the proof of (i) and applying Theorem \ref{tnapf}, (ii) follows.

(iii) Note that $1-\frac{1}{x}\leq x^{\alpha}-x^{\alpha-1}$ for all $x>0.$ Applying Theorem \ref{tnapf} to this inequality with $h(x)=x^{\beta},$ we know (iii) is true. 
\end{proof}

Let 
\begin{align}
r_{\delta}(x)&= \left[ \ln \delta+2 \left( 1-\dfrac{2\delta}{x+\delta} \right) \right] x^{\alpha} \nonumber\\
s_{\delta}(x)&=\left[ \ln \delta+4-\dfrac{8\sqrt{\delta}}{\sqrt{x}+\sqrt{\delta}} \right] x^{\alpha}\nonumber\\
q(x)&=x^{\alpha}\ln(x) \label{rrsqjk}\\
j_{\delta}(x)&=x^{\alpha+\frac{1}{2}}\dfrac{1}{\sqrt{\delta}}-x^{\alpha-\frac{1}{2}}\sqrt{\delta}+x^{\alpha}\ln\delta \nonumber\\
k_{\delta}(x)&=\dfrac{x^{\alpha+1}}{2\delta}-\dfrac{x^{\alpha -1}}{2}\delta+x^{\alpha}\ln \delta \nonumber
\end{align}

Ultilizing refined Hermite-Hadamard inequality (see e.g. \cite[Lemma 2.7]{Wang2020a}), we obtained the following proposition, which is crucial for future results.

\begin{proposition}\label{PrHHi}
Let $r_{\delta}(x),
 s_{\delta}(x),
 q(x),
 j_{\delta}(x),
 k_{\delta}(x)$ be defined as above.
\begin{itemize}
\item[(a)] If $\alpha\geq 0$ and $x\geq \delta\geq 1,$ then
\begin{align*}
r_{\delta}(x)
\leq s_{\delta}(x)
\leq q(x)
\leq j_{\delta}(x)
\leq k_{\delta}(x).
\end{align*}
\item[(b)] If $\alpha\geq 0$ and $x\geq \delta\geq 1,$ then
\begin{align*}
s_1(x)\leq  s_{\delta}(x)\,\ \mbox{and} \,\ j_{\delta}(x)\leq j_1(x).	
\end{align*}
\item[(c)] If $\alpha\geq 0$ and $x\leq \delta\leq 1,$ then
\begin{align*}
k_{\delta}(x)
\leq j_{\delta}(x)
\leq q(x)
\leq s_{\delta}(x)
\leq r_{\delta}(x)
\end{align*}	
\item[(d)] If $\alpha\geq 0$ and $x\leq \delta\leq 1,$ then
\begin{align*}
s_{\delta}(x)\leq  s_1(x) \,\  \mbox{and} \,\ j_1(x)\leq j_{\delta}(x).	
\end{align*}		
\end{itemize}

\end{proposition}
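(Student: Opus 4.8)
The plan is to divide out the common positive factor $x^{\alpha}$, reduce each assertion to a one-variable inequality, recognise the decisive steps as instances of the Hermite--Hadamard inequality for the convex function $y\mapsto 1/y$, and then deduce (c) from (a) and (d) from (b) by a reflection symmetry. First I would normalise: since $\alpha\ge 0$ and $x>0$, the factor $x^{\alpha}$ is positive, so every inequality in (a)--(d) is equivalent to the one obtained after dividing by $x^{\alpha}$. Writing $t=x/\delta$ and subtracting the common summand $\ln\delta$, the five functions become, in order,
\[
2\,\frac{t-1}{t+1},\qquad 4\,\frac{\sqrt{t}-1}{\sqrt{t}+1},\qquad \ln t,\qquad \sqrt{t}-\frac{1}{\sqrt{t}},\qquad \frac{t}{2}-\frac{1}{2t};
\]
equivalently, with $\tau=\ln t$, these are $2\tanh(\tau/2)$, $4\tanh(\tau/4)$, $\tau$, $2\sinh(\tau/2)$, $\sinh\tau$. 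Thus (a) is precisely the chain $2\tanh(\tau/2)\le 4\tanh(\tau/4)\le\tau\le 2\sinh(\tau/2)\le\sinh\tau$ for $\tau\ge 0$ (equivalently $t\ge 1$, i.e.\ $x\ge\delta$), and (c) is the same chain with all inequalities reversed, for $\tau\le 0$. Since each of the five expressions is an odd function of $\tau$, the substitution $\tau\mapsto-\tau$ (that is, $t\mapsto 1/t$, or $x\mapsto\delta^{2}/x$) reverses every inequality and swaps the two regimes; hence (c) follows at once from (a), and likewise I will get (d) from (b) below.

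To prove the chain for $\tau\ge 0$, I would handle the two middle inequalities by the (refined) Hermite--Hadamard inequality \cite[Lemma 2.7]{Wang2020a} applied to $y\mapsto 1/y$: on $[\delta,x]$, multiplying through by $x-\delta>0$, it gives $2\frac{t-1}{t+1}\le\ln t\le\frac{t}{2}-\frac{1}{2t}$; on $[\sqrt{\delta},\sqrt{x}]$, multiplying by $2(\sqrt{x}-\sqrt{\delta})>0$ and using $\ln t=2\ln(\sqrt{x}/\sqrt{\delta})$, it gives $4\frac{\sqrt{t}-1}{\sqrt{t}+1}\le\ln t\le\sqrt{t}-\frac{1}{\sqrt{t}}$. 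The two remaining endpoint inequalities are elementary: in the variable $\tau$ the first reads $\tanh(2\sigma)\le 2\tanh\sigma$ with $\sigma=\tau/4\ge 0$, true because $\tanh(2\sigma)=2\tanh\sigma/(1+\tanh^{2}\sigma)$, and the last reads $2\sinh(\tau/2)\le 2\sinh(\tau/2)\cosh(\tau/2)$, true because $\cosh(\tau/2)\ge 1$; equivalently, with $u=\sqrt{t}\ge 1$, they become (after clearing denominators) $(u-1)^{2}\ge 0$ and $u^{4}-2u^{3}+2u-1=(u-1)^{3}(u+1)\ge 0$. Interleaving the two Hermite--Hadamard chains through these two estimates proves (a), and hence (c).

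For (b) and (d) I would again divide by $x^{\alpha}>0$ and, for a fixed $x>0$, study $g(\delta):=\bigl(s_{\delta}(x)-s_{1}(x)\bigr)/x^{\alpha}$ and $p(\delta):=\bigl(j_{\delta}(x)-j_{1}(x)\bigr)/x^{\alpha}$ as functions of $\delta>0$. Both vanish at $\delta=1$, and a direct computation gives
\[
g'(\delta)=\frac{(\sqrt{x}-\sqrt{\delta})^{2}}{\delta\,(\sqrt{x}+\sqrt{\delta})^{2}}\ \ge\ 0,
\qquad
p'(\delta)=-\,\frac{(\sqrt{x}-\sqrt{\delta})^{2}}{2\,\delta^{3/2}\sqrt{x}}\ \le\ 0,
\]
so $g$ is nondecreasing and $p$ is nonincreasing on $(0,\infty)$. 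Hence $g(\delta)\ge 0$ and $p(\delta)\le 0$ for $\delta\ge 1$, giving $s_{1}(x)\le s_{\delta}(x)$ and $j_{\delta}(x)\le j_{1}(x)$, which is (b); and $g(\delta)\le 0$ and $p(\delta)\ge 0$ for $0<\delta\le 1$, giving $s_{\delta}(x)\le s_{1}(x)$ and $j_{1}(x)\le j_{\delta}(x)$, which is (d).

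The only genuinely non-elementary ingredient is the Hermite--Hadamard step; the rest is bookkeeping of the reductions, two polynomial (equivalently hyperbolic) identities, and two derivative computations. I expect the points needing care to be the choice of intervals, $[\delta,x]$ versus $[\sqrt{\delta},\sqrt{x}]$, so that the four one-variable bounds line up exactly with $r_{\delta},s_{\delta},j_{\delta},k_{\delta}$, and the tracking of inequality directions through the $t\mapsto 1/t$ reflection that yields (c) from (a) and (d) from (b).
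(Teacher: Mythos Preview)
Your proof is correct and follows the same approach that the paper indicates: the paper's ``proof'' simply cites \cite[Theorems 2.8, 2.11, 3.1, Proposition 3.2]{Wang2020a} and flags the refined Hermite--Hadamard inequality \cite[Lemma 2.7]{Wang2020a} as the key ingredient, which is exactly the tool you employ (applied to $y\mapsto 1/y$ on $[\delta,x]$ and on $[\sqrt{\delta},\sqrt{x}]$). Your write-up is in fact more self-contained and adds two pleasant simplifications not made explicit in the paper: the substitution $t=x/\delta$, which shows the chain in (a)/(c) depends only on $t$ (so the hypotheses $\delta\ge 1$, resp.\ $\delta\le 1$, are not actually needed there), and the hyperbolic reformulation that makes the oddness argument for deducing (c) from (a) transparent. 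Your monotonicity computation for (b)/(d) is also correct; the side comment that (d) would follow from (b) by reflection is a bit loose, but since you prove (b) and (d) together directly from $g'(\delta)\ge 0$, $p'(\delta)\le 0$, nothing depends on it.
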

\begin{proof}
One could find them in the proof of Theorem 2.8, Theorem 2.11, Theorem 3.1 and Proposition 3.2 in \cite{Wang2020a}.	
\end{proof}


 \begin{theorem}\label{ulbroe}
 Let $A$ and $B$ be two positive invertible elements in a unital JB-algebra $\Al,$ $\alpha\geq 0$ and $\beta>0.$
 \begin{itemize}
 \item[(i)] If $A^{\beta}\leq B,$ then
 \begin{align}
\label{main1}
{\rm I}
\leq {\rm II}
\leq  S_{\alpha, \beta}(A|B)
\leq {\rm III}
\leq {\rm V}
\end{align}
  \item[(ii)] If $A^{\beta}\geq B,$ 	then
  \begin{align}
\label{main2}
{\rm V}
\leq {\rm III}
\leq  S_{\alpha, \beta}(A|B)
\leq {\rm III}
\leq {\rm I}
\end{align}	
 \end{itemize}
 \end{theorem}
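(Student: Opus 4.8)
The plan is to deduce Theorem~\ref{ulbroe} from the refined Hermite--Hadamard inequalities collected in Proposition~\ref{PrHHi} together with the monotonicity of the nonassociative perspective function in Theorem~\ref{tnapf}. The key observation is that each of the quantities $\mathrm{I},\mathrm{II},S_{\alpha,\beta}(A|B),\mathrm{III},\mathrm{V}$ can be realized as a perspective function $P_{g\triangle h}(B,A)$ with $h(x)=x^{\beta}$ and $g$ ranging over the family $r_{\delta},s_{\delta},q,j_{\delta},k_{\delta}$ from \eqref{rrsqjk}, for a suitable choice of the parameter $\delta$. Concretely, I expect that taking $\delta=1$ gives $P_{s_1\triangle h}(B,A)=\mathrm{II}$ (since $s_1(x)=(4-8(\sqrt{x}+1)^{-1})x^\alpha$ matches \eqref{II}), $P_{q\triangle h}(B,A)=S_{\alpha,\beta}(A|B)$ directly from \eqref{groe2}, $P_{j_1\triangle h}(B,A)=\mathrm{III}$ (since $j_1(x)=x^{\alpha+1/2}-x^{\alpha-1/2}$ matches \eqref{III}), while $\mathrm{I}$ and $\mathrm{V}$ correspond to the functions $r(x),\tfrac12(x^{\alpha+1}-x^{\alpha-1})$ already identified in Proposition~\ref{orlurnp}. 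So the first step is to record these identifications carefully, using that functional calculus in JB-algebras commutes with $U_{A^{\beta/2}}$ exactly as was done in the proofs of Theorem~\ref{tjroei} and Proposition~\ref{jroexlogx}.

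Second, I would handle case~(i): the hypothesis $A^{\beta}\le B$ translates, via Proposition~\ref{3inv}(1) applied to $U_{A^{-\beta/2}}$, into $\{A^{-\beta/2}BA^{-\beta/2}\}\ge I$, i.e. the spectrum of the ``inner'' element $X:=\{A^{-\beta/2}BA^{-\beta/2}\}$ lies in $[1,\infty)$. This is precisely the regime $x\ge\delta$ with $\delta=1$ in Proposition~\ref{PrHHi}(a), which yields the scalar chain $s_1(x)\le q(x)\le j_1(x)$ on $\mathrm{Sp}(X)\subset[1,\infty)$; applying Theorem~\ref{tnapf} with $h(x)=x^\beta$ (and noting the spectral containment hypotheses there are met since $\mathrm{Sp}(B)$ and $\mathrm{Sp}(X)$ are what they are) converts this into $\mathrm{II}\le S_{\alpha,\beta}(A|B)\le\mathrm{III}$. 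The outer inequalities $\mathrm{I}\le\mathrm{II}$ and $\mathrm{III}\le\mathrm{V}$ do not need the hypothesis $A^\beta\le B$: the inequality $r(x)\le s_1(x)$ and $j_1(x)\le k_1(x)$ — or directly the elementary scalar inequalities comparing these with the functions defining $\mathrm{I}$ and $\mathrm{V}$ — hold for all $x>0$, again by the computations underlying Proposition~\ref{PrHHi}(a) (the $\delta=1$ endpoints), so Theorem~\ref{tnapf} applies unconditionally. Assembling these four links gives \eqref{main1}.

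Third, case~(ii) is the mirror image: $A^{\beta}\ge B$ gives $\mathrm{Sp}(X)\subset(0,1]$, which is the regime $x\le\delta\le1$ with $\delta=1$ in Proposition~\ref{PrHHi}(c), reversing all the scalar inequalities to $k_1(x)\le j_1(x)\le q(x)\le s_1(x)\le r_1(x)$ on $\mathrm{Sp}(X)$; feeding this into Theorem~\ref{tnapf} yields the reversed chain in \eqref{main2}. (I note the statement of \eqref{main2} as printed has $\mathrm{III}$ appearing twice where presumably $\mathrm{II}$ is meant in one slot; in writing the proof I would produce $\mathrm{V}\le\mathrm{III}\le S_{\alpha,\beta}(A|B)\le\mathrm{II}\le\mathrm{I}$, matching the reversal of \eqref{main1}.)

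The main obstacle I anticipate is not conceptual but bookkeeping: one must verify that the hypotheses of Theorem~\ref{tnapf} are genuinely satisfied at each application — in particular that the relevant functions are continuous on a closed interval containing both $\mathrm{Sp}(B)$ and $\mathrm{Sp}(\{h(B)^{-1/2}(\cdot)h(B)^{-1/2}\})$, which here means being careful about the roles of $A$ and $B$ since the perspective is $P_{g\triangle h}(B,A)$ with the \emph{second} slot carrying the outer conjugation $A^{\beta/2}$. One also has to make sure the scalar inequalities in Proposition~\ref{PrHHi} are being invoked on the correct sub-interval determined by $\mathrm{Sp}(X)$, not on all of $(0,\infty)$, since several of the comparisons (e.g. $s_\delta\le q\le j_\delta$) genuinely fail outside $x\ge\delta$. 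Once these spectral localizations are pinned down, each inequality in \eqref{main1} and \eqref{main2} is a one-line application of Theorem~\ref{tnapf}.
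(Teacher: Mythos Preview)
Your approach is essentially the paper's: apply Theorem~\ref{tnapf} to the scalar chain of Proposition~\ref{PrHHi}(a) (resp.\ (c)) with $\delta=1$ and $h(t)=t^{\beta}$, after identifying $\mathrm{I},\mathrm{II},S_{\alpha,\beta}(A|B),\mathrm{III},\mathrm{V}$ with the perspectives of $r_1,s_1,q,j_1,k_1$. The paper's proof is exactly this, stated in two lines.

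One correction, though: your side claim that the outer inequalities $\mathrm{I}\le\mathrm{II}$ and $\mathrm{III}\le\mathrm{V}$ hold unconditionally is false. For instance, with $u=\sqrt{x}$ one has
\[
s_1(x)-r_1(x)=\frac{2(u-1)^3}{(u+1)(u^2+1)}\,x^{\alpha},\qquad
k_1(x)-j_1(x)=\frac{(x-1)(\sqrt{x}-1)^2}{2}\,x^{\alpha-1},
\]
both of which change sign at $x=1$; so $r_1\le s_1$ and $j_1\le k_1$ hold on $[1,\infty)$ and reverse on $(0,1]$, exactly as Proposition~\ref{PrHHi}(a),(c) assert. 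This does not damage the argument---just drop the unconditional claim and invoke the full five-term chain of Proposition~\ref{PrHHi}(a) (resp.\ (c)) at $\delta=1$ on $\operatorname{Sp}(X)\subset[1,\infty)$ (resp.\ $(0,1]$), which is what the paper does. Your observation about the typo in \eqref{main2} (the second $\mathrm{III}$ should be $\mathrm{II}$) is correct.
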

\begin{proof}
(i). Applying Theorem \ref{tnapf} to Proposition \ref{PrHHi} (a) with $\delta=1$ and with $h(t)=t^{\beta}$, we derive the inequality
\begin{align*}
 {\rm I}\leq {\rm II}\leq  S_{\alpha, \beta}(A|B)\leq {\rm III}\leq {\rm V}.
\end{align*} 

Proof of (ii). Using the perspective functions associated with the five functions in Proposition \ref{PrHHi} (c) with $\delta=1$ and applying Theorem \ref{tnapf}, (\ref{main2}) follows.
\end{proof}
Theorem \ref{ulbroe} above improves the upper and lower bounds of relative operator $(\alpha, \beta)$-entropy established by Nikoufar \cite{Nikoufar2020} and extends it to the setting of JB-algebras. 

Combining Theorem \ref{ulbroe} and Proposition \ref{orlurnp}, we have 
\begin{corollary}\label{ulbroe4}
 Let $A$ and $B$ be two positive invertible elements in a unital JB-algebra $\Al,$ $\alpha\geq 0$ and $\beta>0.$
 \begin{itemize}
 \item[(i)] If $A^{\beta}\leq B,$ then
 \begin{align*}
A\#_{(\alpha, \beta)}B-A\#_{(\alpha-1, \beta)}B
&\leq {\rm I}
\leq {\rm II}\\
&\leq  S_{\alpha, \beta}(A|B)\\
&\leq {\rm III}
\leq {\rm V}\\
&\leq A\#_{(\alpha+1, \beta)}B-A\#_{(\alpha, \beta)}B
\end{align*}
\item[(ii)] If $A^{\beta}\geq B,$ 	then
  \begin{align*}
A\#_{(\alpha, \beta)}B-A\#_{(\alpha-1, \beta)}B
&\leq  {\rm V}\leq {\rm III}\\
&\leq S_{\alpha, \beta}(A|B)\\
&\leq {\rm II}\leq {\rm I}\\
&\leq A\#_{(\alpha +1 , \beta)}B-A\#_{(\alpha, \beta)}B.
\end{align*}
\end{itemize}
 \end{corollary}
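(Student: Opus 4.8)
The plan is to assemble Corollary \ref{ulbroe4} directly from two ingredients already in hand: Theorem \ref{ulbroe}, which sandwiches $S_{\alpha,\beta}(A|B)$ between the quantities ${\rm I}$ (or ${\rm II}$) from below and ${\rm III}$ (or ${\rm V}$) from above depending on the ordering of $A^\beta$ and $B$, and Proposition \ref{orlurnp}, which controls ${\rm I}$, ${\rm V}$, and the one-sided difference $A\#_{(\alpha,\beta)}B - A\#_{(\alpha-1,\beta)}B$ against the ``geometric-mean increments'' $A\#_{(\alpha,\beta)}B - A\#_{(\alpha-1,\beta)}B$ and $A\#_{(\alpha+1,\beta)}B - A\#_{(\alpha,\beta)}B$. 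So the corollary is nothing more than prepending and appending one extra inequality to each chain in Theorem \ref{ulbroe}.

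For part (i), I would start from the chain ${\rm I}\leq{\rm II}\leq S_{\alpha,\beta}(A|B)\leq{\rm III}\leq{\rm V}$ of \eqref{main1}. On the left, Proposition \ref{orlurnp}(i) gives $A\#_{(\alpha,\beta)}B - A\#_{(\alpha-1,\beta)}B\leq {\rm I}$, so I splice that in at the bottom. On the right, Proposition \ref{orlurnp}(ii) gives ${\rm V}\leq A\#_{(\alpha+1,\beta)}B - A\#_{(\alpha,\beta)}B$, which I splice in at the top. Concatenating with transitivity of $\leq$ on the JB-algebra (self-adjoint order) yields the displayed six-term chain. For part (ii), I start from \eqref{main2}, namely ${\rm V}\leq{\rm III}\leq S_{\alpha,\beta}(A|B)\leq{\rm II}\leq{\rm I}$ (I will use the corrected form of the chain with ${\rm II}$ in the fourth slot, matching the statement of the corollary), and I again bracket it: Proposition \ref{orlurnp}(ii) supplies $A\#_{(\alpha,\beta)}B - A\#_{(\alpha-1,\beta)}B\leq {\rm V}$ at the bottom, and Proposition \ref{orlurnp}(i) supplies ${\rm I}\leq A\#_{(\alpha+1,\beta)}B - A\#_{(\alpha,\beta)}B$ at the top. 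Transitivity then closes the chain.

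There is essentially no analytic obstacle here; the proof is bookkeeping in the partial order of a JB-algebra, and the only thing one must be slightly careful about is matching the exact roles of ${\rm I}$, ${\rm II}$, ${\rm III}$, ${\rm V}$ in the middle portion to what Theorem \ref{ulbroe} actually delivers (in particular using the ${\rm III}\leq{\rm V}$ versus ${\rm V}\leq{\rm III}$ orientation appropriate to whether $A^\beta\leq B$ or $A^\beta\geq B$), and ensuring the lower/upper endpoints from Proposition \ref{orlurnp} are the ones stated with the same $\alpha$ and $\beta$. I would therefore write:

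\begin{proof}
Both assertions follow by concatenating Theorem \ref{ulbroe} with Proposition \ref{orlurnp}, using transitivity of the order on $\Al$.

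For (i), assume $A^{\beta}\leq B$. By \eqref{main1},
\begin{align*}
{\rm I}\leq {\rm II}\leq S_{\alpha,\beta}(A|B)\leq {\rm III}\leq {\rm V}.
\end{align*}
By Proposition \ref{orlurnp}(i), $A\#_{(\alpha,\beta)}B-A\#_{(\alpha-1,\beta)}B\leq {\rm I}$, and by Proposition \ref{orlurnp}(ii), ${\rm V}\leq A\#_{(\alpha+1,\beta)}B-A\#_{(\alpha,\beta)}B$. Combining these three chains yields
\begin{align*}
A\#_{(\alpha,\beta)}B-A\#_{(\alpha-1,\beta)}B
\leq {\rm I}\leq {\rm II}\leq S_{\alpha,\beta}(A|B)\leq {\rm III}\leq {\rm V}
\leq A\#_{(\alpha+1,\beta)}B-A\#_{(\alpha,\beta)}B.
\end{align*}

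For (ii), assume $A^{\beta}\geq B$. By \eqref{main2},
\begin{align*}
{\rm V}\leq {\rm III}\leq S_{\alpha,\beta}(A|B)\leq {\rm II}\leq {\rm I}.
\end{align*}
By Proposition \ref{orlurnp}(ii), $A\#_{(\alpha,\beta)}B-A\#_{(\alpha-1,\beta)}B\leq {\rm V}$, and by Proposition \ref{orlurnp}(i), ${\rm I}\leq A\#_{(\alpha+1,\beta)}B-A\#_{(\alpha,\beta)}B$. Concatenation gives
\begin{align*}
A\#_{(\alpha,\beta)}B-A\#_{(\alpha-1,\beta)}B
\leq {\rm V}\leq {\rm III}\leq S_{\alpha,\beta}(A|B)\leq {\rm II}\leq {\rm I}
\leq A\#_{(\alpha+1,\beta)}B-A\#_{(\alpha,\beta)}B,
\end{align*}
which is the claimed chain.
\end{proof}
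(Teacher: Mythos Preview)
Your proof is correct and matches the paper's approach exactly: the paper simply states that the corollary follows by ``Combining Theorem \ref{ulbroe} and Proposition \ref{orlurnp}'', and you have written out precisely that concatenation. Your identification of which half of Proposition \ref{orlurnp}(i) and (ii) to use at each end is accurate, and your handling of the evident typo in \eqref{main2} (reading ${\rm II}$ where the display has ${\rm III}$ in the fourth slot) is appropriate and consistent with the corollary's statement.
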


The following result refines the upper and lower bounds of relative operator entropy obtained by Nikoufar \cite{Nikoufar14, Nikoufar2020}, which improved the bounds established by Fujii and Kamei \cite{Fujii89relative, Fujii89Uhlmann}, and extends it to the setting of JB-algebras. 
\begin{corollary}\label{ulbroe1}
 Let $A$ and $B$ be two positive invertible elements in a unital JB-algebra $\Al,$ $\alpha\geq 0$ and $\beta>0.$
 \begin{itemize}
 \item[(i)] If $A\leq B,$ then
\begin{align*}
A-\left\{AB^{-1}A\right\}
&\leq 2\left(A-2\{A(A+B)^{-1}A\}\right)\\
&\leq 4A-8\left\{A(A\#_{\frac{1}{2}}B+A)^{-1}A\right\}\\
&\leq S(A|B)\\
&\leq A\#_{\frac{1}{2}}B-A\#_{-\frac{1}{2}}B\\
&\leq \frac{1}{2}\left(B-\{AB^{-1}A\}\right)\\
&\leq B-A.
\end{align*}
\item[(ii)] If $A\geq B,$ 	then
\begin{align*}
A-\{AB^{-1}A\}&\leq \frac{1}{2}\left(B-\{AB^{-1}A\}\right)\\
&\leq A\#_{\frac{1}{2}}B-A\#_{-\frac{1}{2}}B\\
&\leq S(A|B)\\
&\leq 4A-8\{A(A\#_{\frac{1}{2}}B+A)^{-1} A\}\\
&\leq 2\left(A-2\{A(A+B)^{-1}A\}\right)\\
&\leq B-A.
\end{align*}
\end{itemize}
\end{corollary}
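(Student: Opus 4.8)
The plan is to deduce Corollary \ref{ulbroe1} from Corollary \ref{ulbroe4} by specializing to $\alpha=0$, $\beta=1$ and then rewriting each quantity occurring there in the explicit form appearing in the statement. The two trivial reductions come first: $S_{0,1}(A\vert B)=S(A\vert B)$, which was noted right after the definition of $S_{\alpha,\beta}$, and the hypotheses $A^{\beta}\leq B$, $A^{\beta}\geq B$ of Corollary \ref{ulbroe4} collapse to $A\leq B$, $A\geq B$ when $\beta=1$. After that there is no new inequality to prove, only a term-by-term translation.

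For the translation I would use only elementary facts about the quadratic map: linearity \eqref{LJI}; the value $U_C(I)=C^2$, immediate from \eqref{JI}; the inverse rules $(\{CDC\})^{-1}=\{C^{-1}D^{-1}C^{-1}\}$ and $U_{C^{-1}}=U_C^{-1}$ of Proposition \ref{3inv}(2)--(3); and the fundamental identity $U_{U_CD}=U_CU_DU_C$ of Jordan algebras, whose case $D=I$ gives $U_{A^{1/2}}^2=U_A$ (all of this is also recorded for the means $A\#_{(\alpha,\beta)}B$ in \cite{Wang2020b}). With $C=A^{1/2}$ these give $A\#_{(0,1)}B=\{A^{1/2}IA^{1/2}\}=A$; $A\#_{(1,1)}B=\{A^{1/2}\{A^{-1/2}BA^{-1/2}\}A^{1/2}\}=U_{A^{1/2}}U_{A^{-1/2}}(B)=B$, since $U_{A^{-1/2}}=U_{A^{1/2}}^{-1}$; and, after inverting the inner factor by Proposition \ref{3inv}(2), $A\#_{(-1,1)}B=\{A^{1/2}\{A^{1/2}B^{-1}A^{1/2}\}A^{1/2}\}=U_{A^{1/2}}^2(B^{-1})=\{AB^{-1}A\}$. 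Consequently the two extreme terms of Corollary \ref{ulbroe4} become $A-\{AB^{-1}A\}$ and $B-A$, while ${\rm III}=A\#_{\frac{1}{2}}B-A\#_{-\frac{1}{2}}B$ and ${\rm V}=\tfrac{1}{2}\bigl(B-\{AB^{-1}A\}\bigr)$.

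The only computations needing a moment's care are ${\rm I}$ and ${\rm II}$, where a nested triple product must be collapsed. Put $Y=\{A^{-\frac{1}{2}}BA^{-\frac{1}{2}}\}$; using $\{A^{-1/2}AA^{-1/2}\}=U_{A^{-1/2}}U_{A^{1/2}}(I)=I$ and linearity one gets $I+Y=U_{A^{-1/2}}(A+B)$, hence $(I+Y)^{-1}=U_{A^{1/2}}\bigl((A+B)^{-1}\bigr)$ by Proposition \ref{3inv}(2), and therefore $\{A^{1/2}(I+Y)^{-1}A^{1/2}\}=U_{A^{1/2}}^2\bigl((A+B)^{-1}\bigr)=\{A(A+B)^{-1}A\}$; inserting this into \eqref{I} with $\alpha=0$ yields ${\rm I}=2\bigl(A-2\{A(A+B)^{-1}A\}\bigr)$. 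The same computation with $\{A^{-\frac{1}{2}}BA^{-\frac{1}{2}}\}^{1/2}=U_{A^{-1/2}}(A\#_{\frac{1}{2}}B)$ in place of $Y$ turns \eqref{II} (again with $\alpha=0$) into ${\rm II}=4A-8\{A(A\#_{\frac{1}{2}}B+A)^{-1}A\}$. Substituting all of these identifications into the two chains of Corollary \ref{ulbroe4} reproduces verbatim the displays of Corollary \ref{ulbroe1}(i) and (ii).

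I expect the only real obstacle to be bookkeeping rather than mathematics: because the algebra is nonassociative the factors $A^{\pm1/2}$ cannot be cancelled formally, so each of the simplifications above must be routed through an identity for $U$ — its linearity, the inverse rule, $U_C(I)=C^2$, and $U_{A^{1/2}}^2=U_A$ — and one must remember the auxiliary identity $\{A^{-1/2}AA^{-1/2}\}=I$, which is used silently when rewriting $I+Y$ and $\{A^{-\frac{1}{2}}BA^{-\frac{1}{2}}\}^{1/2}+I$. All the substantive content — operator monotonicity through Theorem \ref{tnapf} and the refined Hermite--Hadamard estimates through Proposition \ref{PrHHi} — is already absorbed into Corollary \ref{ulbroe4}.
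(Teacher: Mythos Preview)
Your proposal is correct and follows the same route the paper intends: the corollary is the specialization $\alpha=0$, $\beta=1$ of Corollary~\ref{ulbroe4}, after which each of ${\rm I},{\rm II},{\rm III},{\rm V}$ and the outer terms $A\#_{(\alpha,\beta)}B-A\#_{(\alpha-1,\beta)}B$, $A\#_{(\alpha+1,\beta)}B-A\#_{(\alpha,\beta)}B$ is rewritten in the explicit form shown. The paper gives no separate proof for this corollary; the only place where the analogous simplifications are written out is in the proof of the next corollary, where the authors compress your $U$-map manipulations into a single appeal to Proposition~\ref{3inv} and Macdonald's theorem rather than working through the fundamental identity $U_{A^{1/2}}^2=U_A$ step by step as you do --- but that is a difference in exposition, not in strategy.
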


Suppose that $A, B$ are two positive invertible elements. For any real number $\delta>0,$ $\alpha\geq 0$ and $\beta>0,$ we denote
\begin{align*}
{\rm I'}&=(\ln\delta+2)A\#_{(\alpha,\beta)} B
-2\delta \left\{A^{\frac{\beta}{2}}[(\{A^{-\frac{\beta}{2}}BA^{-\frac{\beta}{2}}\}+\delta)^{-1}\circ\{A^{-\frac{\beta}{2}}BA^{-\frac{\beta}{2}}\}^{\alpha}]A^{\frac{\beta}{2}}\right\}\\
{\rm II'}&=(\ln\delta+4)A\#_{(\alpha,\beta)} B
-8\sqrt{\delta}\left\{A^{\frac{\beta}{2}}[(\{A^{-\frac{\beta}{2}}BA^{-\frac{\beta}{2}}\}^{\frac{1}{2}}+\sqrt{\delta})^{-1}\circ \{A^{-\frac{\beta}{2}}BA^{-\frac{\beta}{2}}\}^{\alpha}]A^{\frac{\beta}{2}}\right\}\\
{\rm III'}&=(\frac{1}{\sqrt{\delta}}A\#_{(\alpha+\frac{1}{2},\beta)}B-\sqrt{\delta} A\#_{(\alpha-\frac{1}{2},\beta)}B)+\ln\delta A\#_{(\alpha,\beta)}B\\
{\rm V'}&=\frac{1}{2}(\frac{1}{\delta}A\#_{(\alpha+1,\beta)}B-\delta A\#_{(\alpha-1,\beta)}B)+\ln\delta A\#_{(\alpha,\beta)}B.
\end{align*}
\begin{theorem}\label{ulbroe3}
 Let $A$ and $B$ be two positive invertible elements in a unital JB-algebra $\Al,$ $\alpha\geq 0$ and $\beta>0.$
 \begin{itemize}
\item[(i)] If $\delta \geq 1$ and $\delta A^{\beta}\leq B,$ then 
\begin{align*}
{\rm I'}\leq {\rm II'}\leq S_{\alpha, \beta}(A|B)\leq {\rm III'}\leq {\rm V'}.	
\end{align*}
\item[(ii)] If $\delta \geq 1$ and $\delta A^{\beta}\leq B,$ then 
\begin{align*}
{\rm II}\leq {\rm II'}\quad \mbox{and}\quad {\rm III'}\leq {\rm III}.
\end{align*}	
\item[(iii)] If $\delta\leq 1$ and $\delta A^{\beta}\geq B,$ then 
\begin{align*}
{\rm V'}\leq {\rm III'}\leq  S_{\alpha, \beta}(A|B)\leq {\rm II'}\leq {\rm I'}.
\end{align*}
\item[(iv)] If $\delta\leq 1$ and $\delta A^{\beta}\geq B,$ then 
\begin{align*}
{\rm II'}\leq {\rm II}\quad \mbox{and}\quad {\rm III}\leq {\rm III'}.
\end{align*}		
\end{itemize}
\end{theorem}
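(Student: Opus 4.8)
The plan is to mirror the proof of Theorem \ref{ulbroe} but now feeding the five functions from Proposition \ref{PrHHi} (parts (a)--(d)) into the nonassociative perspective machinery of Theorem \ref{tnapf}. First I would record the perspective-function computation: with $h(t)=t^{\beta}$ and $A,B$ positive invertible, one has
\begin{align*}
P_{r_\delta\triangle h}(B,A)&={\rm I'},\quad
P_{s_\delta\triangle h}(B,A)={\rm II'},\quad
P_{q\triangle h}(B,A)=S_{\alpha,\beta}(A|B),\\
P_{j_\delta\triangle h}(B,A)&={\rm III'},\quad
P_{k_\delta\triangle h}(B,A)={\rm V'},
\end{align*}
which is just a substitution check: expanding $P_{f\triangle h}(B,A)=\{A^{\beta/2}f(\{A^{-\beta/2}BA^{-\beta/2}\})A^{\beta/2}\}$ against the explicit forms of $r_\delta,s_\delta,q,j_\delta,k_\delta$ in \eqref{rrsqjk}, using linearity of $U_{A^{\beta/2}}$ and $A\#_{(\alpha,\beta)}B=\{A^{\beta/2}(\{A^{-\beta/2}BA^{-\beta/2}\})^{\alpha}A^{\beta/2}\}$. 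The only mild subtlety is reading off ${\rm I'}$: since $r_\delta(x)=(\ln\delta+2)x^{\alpha}-2\delta(x+\delta)^{-1}x^{\alpha}$, the term $2\delta(x+\delta)^{-1}x^{\alpha}$ becomes $2\delta\{A^{\beta/2}[(\{A^{-\beta/2}BA^{-\beta/2}\}+\delta)^{-1}\circ\{A^{-\beta/2}BA^{-\beta/2}\}^{\alpha}]A^{\beta/2}\}$, exactly as in the definition of ${\rm I'}$; similarly for ${\rm II'}$ with $\sqrt{x}+\sqrt{\delta}$.

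Next I would handle the spectral hypothesis. To apply Theorem \ref{tnapf} I need the spectrum of $\{h(A)^{-1/2}B\,h(A)^{-1/2}\}=\{A^{-\beta/2}BA^{-\beta/2}\}$ to lie in an interval $\Idb$ on which the relevant function inequalities of Proposition \ref{PrHHi} hold. The condition $\delta A^{\beta}\le B$ is equivalent, after applying the operator-monotone map $U_{A^{-\beta/2}}$ (Proposition \ref{3inv}(1) applied to $B-\delta A^{\beta}\ge0$), to $\delta I\le\{A^{-\beta/2}BA^{-\beta/2}\}$, i.e. $\Sp{\{A^{-\beta/2}BA^{-\beta/2}\}}\subset[\delta,\infty)$. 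With $\delta\ge1$ this is exactly the range $x\ge\delta\ge1$ required by Proposition \ref{PrHHi}(a) and (b); dually, $\delta A^{\beta}\ge B$ with $\delta\le1$ gives $\Sp{\{A^{-\beta/2}BA^{-\beta/2}\}}\subset(0,\delta]$, matching parts (c) and (d).

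With these two ingredients the theorem follows formally: for (i), feed the chain $r_\delta\le s_\delta\le q\le j_\delta\le k_\delta$ (Proposition \ref{PrHHi}(a)) into Theorem \ref{tnapf} to get ${\rm I'}\le{\rm II'}\le S_{\alpha,\beta}(A|B)\le{\rm III'}\le{\rm V'}$. For (ii), use $s_1\le s_\delta$ and $j_\delta\le j_1$ (Proposition \ref{PrHHi}(b)), noting that $P_{s_1\triangle h}(B,A)={\rm II}$ and $P_{j_1\triangle h}(B,A)={\rm III}$ (the $\delta=1$ specializations recover the expressions \eqref{II} and \eqref{III}), which yields ${\rm II}\le{\rm II'}$ and ${\rm III'}\le{\rm III}$; here one checks that the spectral condition needed, $x\ge1$, is implied by $\delta A^\beta\le B$ with $\delta\ge1$ since then $\Sp{\{A^{-\beta/2}BA^{-\beta/2}\}}\subset[\delta,\infty)\subset[1,\infty)$. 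Parts (iii) and (iv) are the order-reversed mirror images using Proposition \ref{PrHHi}(c) and (d) on the interval $(0,\delta]$. I expect the only genuine point requiring care — and the place where a reader could get lost — is the bookkeeping in the perspective-function identifications, particularly matching the signs, the scalar factors $2\delta$, $8\sqrt\delta$, $\tfrac1{\sqrt\delta}$, $\tfrac1{2\delta}$, and the $\ln\delta\,A\#_{(\alpha,\beta)}B$ terms against the definitions of ${\rm I'},{\rm II'},{\rm III'},{\rm V'}$; once that dictionary is in place, everything is a direct invocation of Theorem \ref{tnapf} exactly as in the proof of Theorem \ref{ulbroe}.
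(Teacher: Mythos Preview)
Your proposal is correct and follows exactly the paper's approach: apply Theorem \ref{tnapf} with $h(t)=t^{\beta}$ to the scalar inequalities of Proposition \ref{PrHHi}(a)--(d), after translating the hypotheses $\delta A^{\beta}\le B$ (resp.\ $\delta A^{\beta}\ge B$) into the spectral condition $\Sp{\{A^{-\beta/2}BA^{-\beta/2}\}}\subset[\delta,\infty)$ (resp.\ $(0,\delta]$). The paper's proof is terse and leaves both the perspective-function identifications and the spectral translation implicit; you have simply written those steps out.
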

\begin{proof}
(i). Using perspective functions associated with the five functions in Propositon \ref{PrHHi}(a) with $h(t)=t^{\beta},$ and applying Theorem \ref{tnapf}, (i) follows. 	

Proof of (ii). Applying Theorem \ref{tnapf} to Propositon \ref{PrHHi}(b) with $h$ as in the proof of (i), we obtain the inequalities 
\begin{align*}
{\rm II}\leq {\rm II'}\quad \mbox{and}\quad {\rm III'}\leq {\rm III}.
\end{align*}	

(iii) follows by applying Theorem \ref{tnapf} to Propositon \ref{PrHHi}(c). Similar argument as in the proof of (ii) gives (iv).
\end{proof} 

The following corollary improves Corollary \ref{ulbroe1} and also sharply refines the lower and upper bounds of the relative operator entropy established by Nikoufar in \cite{Nikoufar14, Nikoufar2020},  
which refined the bounds obtained 
earlier by Fujii and Kamei \cite{Fujii89relative,Fujii89Uhlmann}.
\begin{corollary}
Let $A$ and $B$ be two positive invertible elements in a unital JB-algebra $\Al$.
\begin{itemize}
\item[(i)] If $\delta\geq 1$ and $\delta A\leq B$, then 
\begin{align*}
A-\{AB^{-1}A\}
&\leq 2\left[A-2\{A(A+B)^{-1}A\}\right]\\
&\leq (\ln\delta)A+2\left[A-2\delta \{A(\delta A+B)^{-1}A\}\right]\\
&\leq (\ln\delta+4)A-8\sqrt{\delta}\left\{A(A\#_{\frac{1}{2}}B+\sqrt{\delta}A)^{-1}A\right\} \\
&\leq S(A|B)\\
&\leq \left( \frac{1}{\sqrt{\delta}}A\#_{\frac{1}{2}}B-\sqrt{\delta} A\#_{-\frac{1}{2}}B \right)+(\ln\delta) A\\
&\leq \frac{1}{2} \left( \frac{1}{\delta}B-\delta \{AB^{-1}A\} \right)+(\ln\delta) A\\
&\leq \frac{1}{2}\left(B-\{AB^{-1}A\}\right)\\
&\leq B-A.
\end{align*}
\item[(ii)] If $\delta\leq 1$ and $\delta A\geq B$, then 
\begin{align*}
A-\{AB^{-1}A\}
&\leq \frac{1}{2}(B-\{AB^{-1}A\})\\
&\leq \frac{1}{2} \left( \frac{1}{\delta}B-\delta \{AB^{-1}A\} \right)+(\ln\delta) A\\
&\leq \left( \frac{1}{\sqrt{\delta}}A\#_{\frac{1}{2}}B-\sqrt{\delta} A\#_{-\frac{1}{2}}B \right)+(\ln\delta) A\\
&\leq  S(A|B)\\
&\leq (\ln\delta+4)A-8\sqrt{\delta}\left\{A(A\#_{\frac{1}{2}}B+\sqrt{\delta}A)^{-1}A\right\} \\
&\leq (\ln\delta)A+2\left[A-2\delta \{A(\delta A+B)^{-1}A\}\right]\\
&\leq 2\left[A-2\{A(A+B)^{-1}A\}\right]\\
&\leq B-A.
\end{align*}		
\end{itemize}
\end{corollary}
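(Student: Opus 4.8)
The plan is to derive this corollary as the special case $\alpha = 0$, $\beta = 1$ of Theorem \ref{ulbroe3}, in exactly the same way that Corollary \ref{ulbroe1} is obtained from Theorem \ref{ulbroe}. First I would set $\alpha = 0$ and $\beta = 1$ in the definitions of ${\rm I'}, {\rm II'}, {\rm III'}, {\rm V'}$ preceding Theorem \ref{ulbroe3}, and simplify each expression. With $\beta = 1$ the outer $U_{A^{\beta/2}} = U_{A^{1/2}}$ operations act on $U_{A^{-1/2}}BU_{A^{-1/2}}$; with $\alpha = 0$ the factor $\{A^{-1/2}BA^{-1/2}\}^{\alpha}$ drops out, and $A\#_{(\gamma,1)}B = A\#_{\gamma}B$ for each exponent $\gamma$ appearing. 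For example, $A\#_{(0,1)}B = \{A^{1/2}A^{-1/2}BA^{-1/2}A^{1/2}\} = B$ is wrong — rather $A\#_{(0,1)}B = \{A^{1/2}\cdot I\cdot A^{1/2}\} = A$, so ${\rm III'}$ becomes $(\tfrac{1}{\sqrt\delta}A\#_{1/2}B - \sqrt\delta\, A\#_{-1/2}B) + (\ln\delta)A$, matching the fifth line of the displays; similarly ${\rm V'}$ becomes $\tfrac12(\tfrac1\delta A\#_1 B - \delta A\#_{-1}B) + (\ln\delta)A = \tfrac12(\tfrac1\delta B - \delta\{AB^{-1}A\}) + (\ln\delta)A$, using $A\#_1 B = B$ and $A\#_{-1}B = \{AB^{-1}A\}$ (the latter from Proposition \ref{3inv}(2)). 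The expressions ${\rm I'}$ and ${\rm II'}$ reduce analogously, using $\{A^{1/2}(\{A^{-1/2}BA^{-1/2}\}+\delta)^{-1}A^{1/2}\} = \{A(\delta A + B)^{-1}A\}$, which follows from Proposition \ref{3inv} by the same manipulation used throughout Section 3 (e.g. in the proof of Proposition \ref{jroexlogx}).

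Next I would invoke Theorem \ref{ulbroe3}(i) with these specializations: under the hypothesis $\delta \geq 1$ and $\delta A \leq B$ (which is $\delta A^{\beta} \leq B$ with $\beta = 1$), we get ${\rm I'} \leq {\rm II'} \leq S_{0,1}(A|B) \leq {\rm III'} \leq {\rm V'}$, and $S_{0,1}(A|B) = S(A|B)$ by the remark after the definition of $S_{\alpha,\beta}$. This gives the middle five terms of display (i). To append the two outermost inequalities I would note that $\delta \geq 1$ and $\delta A \leq B$ imply $A \leq B$, so Corollary \ref{ulbroe1}(i) applies and yields $A - \{AB^{-1}A\} \leq 2(A - 2\{A(A+B)^{-1}A\})$ on the left and $\tfrac12(B-\{AB^{-1}A\}) \leq B - A$ on the right; I also need $2(A - 2\{A(A+B)^{-1}A\}) \leq {\rm I'}$ and ${\rm V'} \leq \tfrac12(B - \{AB^{-1}A\})$, which are precisely Theorem \ref{ulbroe3}(ii) (namely ${\rm II} \leq {\rm II'}$, ${\rm III'} \leq {\rm III}$) chained with Corollary \ref{ulbroe4}, or can be read off directly by applying Theorem \ref{tnapf} to the scalar inequality $q(x)|_{\alpha=0} \leq r_\delta(x)$-type comparisons — but the cleanest route is to quote the already-proven Theorem \ref{ulbroe3}(ii). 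Part (ii) of the corollary is handled identically, using Theorem \ref{ulbroe3}(iii)–(iv) with $\delta \leq 1$, $\delta A \geq B$ (hence $A \geq B$), together with Corollary \ref{ulbroe1}(ii).

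I expect the only real work to be the bookkeeping in the first step: verifying that each of the four displayed expressions ${\rm I'}, {\rm II'}, {\rm III'}, {\rm V'}$ and the entropy $S_{0,1}$ collapse to the stated operator expressions involving $\{AB^{-1}A\}$, $\{A(A+B)^{-1}A\}$, $\{A(\delta A+B)^{-1}A\}$, $\{A(A\#_{1/2}B + \sqrt\delta A)^{-1}A\}$, and $A\#_{\pm1/2}B$. Each such identity is an instance of the congruence rule $\{C[\cdots]C\}^{-1} = \{C^{-1}[\cdots]^{-1}C^{-1}\}$ from Proposition \ref{3inv}(2)–(3) composed with $\{A^{1/2}\{A^{-1/2}XA^{-1/2}\}A^{1/2}\} = X$, exactly as in the displayed computations in the proofs of Theorem \ref{tjroei} and Proposition \ref{jroexlogx}; there is no genuine obstacle, only the need to record these reductions carefully. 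Once they are in place, the corollary is immediate from Theorem \ref{ulbroe3} and Corollary \ref{ulbroe1}.
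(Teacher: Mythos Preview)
Your approach is essentially the paper's: set $\alpha=0$, $\beta=1$, simplify ${\rm I},{\rm I'},{\rm II'},{\rm III'},{\rm V'},{\rm V}$ via Proposition~\ref{3inv} (the paper also invokes Macdonald's theorem to justify the two-variable identities such as $\{A^{1/2}(\{A^{-1/2}BA^{-1/2}\}+\delta I)^{-1}A^{1/2}\}=\{A(\delta A+B)^{-1}A\}$), and then assemble the chain from Theorem~\ref{ulbroe}(i), Corollary~\ref{ulbroe1}, and Theorem~\ref{ulbroe3}(i),(ii). Part (ii) is handled symmetrically in both your proposal and the paper.

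There is one concrete slip. You claim that ${\rm I}\le{\rm I'}$ and ${\rm V'}\le{\rm V}$ ``are precisely Theorem~\ref{ulbroe3}(ii)''. They are not: Theorem~\ref{ulbroe3}(ii) asserts ${\rm II}\le{\rm II'}$ and ${\rm III'}\le{\rm III}$, which is a different pair, and chaining with Corollary~\ref{ulbroe4} only yields ${\rm I}\le{\rm II'}$ and ${\rm III'}\le{\rm V}$, not what you need. The inequalities ${\rm I}\le{\rm I'}$ and ${\rm V'}\le{\rm V}$ correspond (at $\alpha=0$) to the scalar comparisons $r_1(x)\le r_\delta(x)$ and $k_\delta(x)\le k_1(x)$ for $x\ge\delta\ge1$, which are not recorded in Proposition~\ref{PrHHi} and must be verified directly and then pushed through Theorem~\ref{tnapf}. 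Your hedged alternative (``applying Theorem~\ref{tnapf} to the scalar inequality'') is in fact the only viable route here; the citation of Theorem~\ref{ulbroe3}(ii) for this step is incorrect. For what it is worth, the paper's own displayed chain in its proof writes ${\rm II}$ rather than ${\rm I'}$ in the third position and likewise does not supply a source for ${\rm V'}\le{\rm V}$, so this gap is present in the paper as well; but since the corollary statement has ${\rm I'}$ and ${\rm V'}$ in the display, you should fill it by proving those two scalar inequalities explicitly.
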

\begin{proof}
Let $\alpha=0$ and $\beta=1.$ 
By Proposition \ref{3inv} and Macdonald Theorem, 
\begin{align*}
{\rm I}&=2\left[A-2\{A(A+B)^{-1}A\}\right]\\
{\rm I'}&=(\ln\delta)A+2\left[A-2\delta \{A(\delta A+B)^{-1}A\}\right]\\
{\rm II'}&=(\ln\delta+4)A-8\sqrt{\delta}\left\{A(A\#_{\frac{1}{2}}B+\sqrt{\delta}A)^{-1}A\right\} \\
{\rm III'}&=	\left( \frac{1}{\sqrt{\delta}}A\#_{\frac{1}{2}}B-\sqrt{\delta} A\#_{-\frac{1}{2}}B \right)+(\ln\delta) A\\
{\rm V'}&=\frac{1}{2} \left( \frac{1}{\delta}B-\delta \{AB^{-1}A\} \right)+(\ln\delta) A\\
{\rm V}&=\frac{1}{2}\left(B-\{AB^{-1}A\}\right)
\end{align*}
 
Proof of (i). Combining Theorem \ref{ulbroe}(i), Corollary \ref{ulbroe1}, and Theorem \ref{ulbroe3} (i), (ii), we obtain desired inequalities
\begin{align*}
A-\{AB^{-1}A\}\leq {\rm I}\leq {\rm II}\leq {\rm II'}\leq S_{\alpha, \beta}(A|B)\leq {\rm III'}\leq {\rm V'}\leq {\rm V}\leq B-A.	
\end{align*}

Similar arguments as in the proof of (i) gives (ii).	
\end{proof}


The following result is the ordering relation between Tsallis relative operator $(\lambda, \beta)$-entropy and relative operator $(0, \beta)$-entropy in the setting of JB-algebras. 
\begin{proposition}
Let $A$ and $B$ be invertible positive elements in a unital JB-algebra $\Al.$  For any $0<\lambda \leq 1$ and $\beta>0$ we have 
\begin{align*}
T_{-\lambda, \beta}(A\vert B)\leq S_{0, \beta}(A\vert B)\leq T_{\lambda, \beta}(A\vert B).	
\end{align*}	
\end{proposition}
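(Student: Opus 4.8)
The plan is to reduce the asserted operator inequality to a pair of elementary scalar inequalities between the functions $x\mapsto\ln_{-\lambda}x$, $x\mapsto\log x$, and $x\mapsto\ln_{\lambda}x$, and then to upgrade these to inequalities in $\Al$ by invoking Theorem \ref{tnapf} for the nonassociative perspective function \eqref{dnapf}. First I would record the scalar estimate: for every $x>0$ and every $0<\lambda\le 1$,
\begin{align*}
\ln_{-\lambda}x=\frac{1-x^{-\lambda}}{\lambda}\;\le\;\log x\;\le\;\frac{x^{\lambda}-1}{\lambda}=\ln_{\lambda}x .
\end{align*}
Both bounds come from the tangent-line inequality $e^{t}\ge 1+t$: putting $t=\lambda\log x$ gives $x^{\lambda}-1\ge\lambda\log x$, which is the right-hand inequality, and putting $t=-\lambda\log x$ gives $x^{-\lambda}-1\ge-\lambda\log x$, i.e.\ $1-x^{-\lambda}\le\lambda\log x$, which is the left-hand inequality.

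Next, since $A$ and $B$ are positive invertible, $\Sp{A}$ and $\Sp{\{A^{-\beta/2}BA^{-\beta/2}\}}$ (the latter positive invertible by Proposition \ref{3inv}) are both contained in a closed interval $\Idb=[c,d]$ with $0<c\le d$, on which the functions $r(x)=\ln_{-\lambda}x$, $q(x)=\log x$, $k(x)=\ln_{\lambda}x$, and $h(x)=x^{\beta}$ are continuous with $h>0$, and $r\le q\le k$ on $\Idb$ by the previous step. Applying Theorem \ref{tnapf} twice, with the element ``$A$'' there taken to be our $B$ and ``$B$'' taken to be our $A$, gives
\begin{align*}
P_{r\triangle h}(B,A)\le P_{q\triangle h}(B,A)\le P_{k\triangle h}(B,A).
\end{align*}
It remains to identify these three perspective functions: from \eqref{dnapf}, $P_{f\triangle h}(B,A)=\{A^{\beta/2}f(\{A^{-\beta/2}BA^{-\beta/2}\})A^{\beta/2}\}$, hence $P_{r\triangle h}(B,A)=T_{-\lambda,\beta}(A\vert B)$, $P_{k\triangle h}(B,A)=T_{\lambda,\beta}(A\vert B)$, and — since $\{A^{-\beta/2}BA^{-\beta/2}\}^{0}=I$ in the definition of $S_{\alpha,\beta}$ — $P_{q\triangle h}(B,A)=\{A^{\beta/2}\log(\{A^{-\beta/2}BA^{-\beta/2}\})A^{\beta/2}\}=S_{0,\beta}(A\vert B)$. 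This yields $T_{-\lambda,\beta}(A\vert B)\le S_{0,\beta}(A\vert B)\le T_{\lambda,\beta}(A\vert B)$.

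A self-contained alternative that avoids Theorem \ref{tnapf} is to apply JB-algebra functional calculus directly to $C:=\{A^{-\beta/2}BA^{-\beta/2}\}$, so that the scalar inequality $r\le q\le k$ on $\Sp{C}$ gives $\ln_{-\lambda}(C)\le\log(C)\le\ln_{\lambda}(C)$ in $\Al$, and then to apply the map $U_{A^{\beta/2}}$, which is linear by \eqref{JI}--\eqref{LJI} and positive by Proposition \ref{3inv}(1), hence order-preserving. I do not expect a substantial obstacle here: the only points requiring care are the bookkeeping in Theorem \ref{tnapf} (choosing the closed interval $\Idb$ bounded away from $0$ so that $\log$ and the power functions are continuous on it, and matching the order of the two arguments of the perspective function to the definitions of $T_{\lambda,\beta}$ and $S_{0,\beta}$), together with the observation that the $\alpha=0$ case collapses the Jordan product with $I$.
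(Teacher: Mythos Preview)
Your proof is correct and follows essentially the same route as the paper: establish the scalar inequality $\ln_{-\lambda}x\le\log x\le\ln_{\lambda}x$ for $x>0$ and then apply Theorem \ref{tnapf} with $h(t)=t^{\beta}$ to obtain the operator inequality. Your write-up is in fact more detailed than the paper's (you justify the scalar bound, verify the spectral hypotheses, and match the perspective functions explicitly), and your alternative via functional calculus plus positivity of $U_{A^{\beta/2}}$ is a legitimate shortcut that bypasses Theorem \ref{tnapf} altogether.
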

\begin{proof}
One sees that for any $0<\lambda\leq 1,$
\begin{align}\label{lnllog}
\ln_{-\lambda} x\leq \log x	\leq \ln_{\lambda} x
\end{align}
hold for all $x>0.$ 
Applying Theorem \ref{tnapf} to (\ref{lnllog}) with $h(t)=t^{\beta}$, we derive the inequalities
\begin{align*}
T_{-\lambda, \beta}(A\vert B)\leq S_{0, \beta}(A\vert B)\leq T_{\lambda, \beta}(A\vert B).		
\end{align*}
	
\end{proof}\begin{proposition}\label{Pgtroei1}
For any positive invertible elements $A$ and $B$ in a unital JB-algebra $\Al,$ $0<\lambda\leq 1$ and $\beta>0,$
\begin{align}\label{gtroei1}
A\#_{(0,\beta)}B-	A\#_{(-1,\beta)}B\leq T_{\lambda ,\beta}(A\vert B)\leq A\#_{(1,\beta)}B-	A\#_{(0,\beta)}B.
\end{align}
Moreover, $T_{\lambda ,\beta}(A\vert B)=0$ if and only if $A^{\beta}=B.$
\end{proposition}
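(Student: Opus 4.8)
The plan is to deduce the inequality (\ref{gtroei1}) by applying the nonassociative perspective machinery (Theorem \ref{tnapf}) to an elementary scalar inequality, exactly in the spirit of the proofs of Theorem \ref{ulbroe} and the propositions of Section 4. First I would observe that for all $x>0$ and $0<\lambda\le 1$ the scalar inequalities
\begin{align*}
1-x^{-1}\leq \ln_{\lambda}x=\dfrac{x^{\lambda}-1}{\lambda}\leq x-1
\end{align*}
hold; these are standard (the function $\lambda\mapsto \ln_\lambda x$ is monotone, with limiting value $\log x$, and $1-x^{-1}\le\log x\le x-1$). Writing $r(x)=x-x^{0}=x^{1}-x^{0}$ after multiplying through — more precisely, set $\tilde r(x)=\ln_\lambda x$, together with the two bounding functions $\underline r(x)=1-x^{-1}$ and $\overline r(x)=x-1$ — and taking $h(x)=x^{\beta}$, I would compute the three associated perspective functions. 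By the definitions in Section 4, $P_{\overline r\triangle h}(B,A)=A\#_{(1,\beta)}B-A\#_{(0,\beta)}B$, $P_{\underline r\triangle h}(B,A)=A\#_{(0,\beta)}B-A\#_{(-1,\beta)}B$, and $P_{\tilde r\triangle h}(B,A)=\{A^{\beta/2}\ln_\lambda(\{A^{-\beta/2}BA^{-\beta/2}\})A^{\beta/2}\}=T_{\lambda,\beta}(A\vert B)$, the last equality being exactly the definition of $T_{\lambda,\beta}$. Then Theorem \ref{tnapf} applied to $\underline r\leq\tilde r\leq\overline r$ on the interval containing the relevant spectra yields (\ref{gtroei1}) at once.

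For the "moreover" clause, the direction $A^\beta=B\Rightarrow T_{\lambda,\beta}(A\vert B)=0$ is immediate: if $A^\beta=B$ then $\{A^{-\beta/2}BA^{-\beta/2}\}=I$ by Proposition \ref{3inv}, and $\ln_\lambda(I)=0$, so $T_{\lambda,\beta}(A\vert B)=\{A^{\beta/2}\,0\,A^{\beta/2}\}=0$. For the converse I would argue: suppose $T_{\lambda,\beta}(A\vert B)=0$. Applying $U_{A^{-\beta/2}}$ (which is a bounded invertible linear map by Proposition \ref{3inv}(3)) gives $\ln_\lambda(\{A^{-\beta/2}BA^{-\beta/2}\})=0$. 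Now $\ln_\lambda$ is a continuous function on $(0,\infty)$ vanishing only at $x=1$, so by functional calculus the element $C:=\{A^{-\beta/2}BA^{-\beta/2}\}$, which is positive invertible, must have spectrum $\{1\}$, forcing $C=I$; indeed $\ln_\lambda(C)=0$ together with spectral mapping gives $\ln_\lambda(\mathrm{Sp}(C))=\{0\}$, hence $\mathrm{Sp}(C)\subseteq\{x>0:\ln_\lambda x=0\}=\{1\}$, and a positive element of a JB-algebra with spectrum $\{1\}$ equals $I$. Finally $\{A^{-\beta/2}BA^{-\beta/2}\}=I$ and Proposition \ref{3inv}(2) give $B=\{A^{\beta/2}A^{\beta/2}\}=A^\beta$, since $U_{A^{\beta/2}}(I)=A^\beta$.

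The only genuinely delicate point is the converse implication, specifically the step "$\ln_\lambda(C)=0\Rightarrow C=I$." This rests on the spectral mapping property of JB-algebra functional calculus (that $\mathrm{Sp}(g(C))=g(\mathrm{Sp}(C))$ for continuous $g$, see \cite[Proposition 1.21]{Alfsen03Jordan}) and on the injectivity of $\ln_\lambda$ on $(0,\infty)$, which is clear since $\ln_\lambda$ is strictly increasing there. Everything else — the scalar inequalities and the translation into perspective functions — is routine and parallels the computations already carried out in Section 4; the perspective-function bookkeeping is identical to that in the proof of Proposition \ref{orlurnp}.
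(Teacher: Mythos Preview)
Your argument for the inequality \eqref{gtroei1} is essentially identical to the paper's: both invoke the scalar inequalities $1-x^{-1}\le \ln_\lambda x\le x-1$ for $x>0$, $0<\lambda\le 1$, and push them through Theorem~\ref{tnapf} with $h(t)=t^{\beta}$.

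For the ``moreover'' clause, your route is correct but genuinely different from the paper's. You invert $U_{A^{\beta/2}}$ directly to obtain $\ln_\lambda(C)=0$ for $C=\{A^{-\beta/2}BA^{-\beta/2}\}$, then use spectral mapping and the injectivity of $\ln_\lambda$ on $(0,\infty)$ to force $C=I$. The paper instead feeds $T_{\lambda,\beta}(A\vert B)=0$ back into the sandwich inequality just established, obtaining (after simplifying the geometric means via Macdonald's theorem) $A^{\beta}-\{A^{\beta}B^{-1}A^{\beta}\}\le 0\le B-A^{\beta}$; it then applies $U_{A^{-\beta}}$ to the left inequality to get $A^{-\beta}\le B^{-1}$, and invokes \cite[Lemma 3.5.3]{Hanche84jordan} (order-reversal of inversion) to conclude $B\le A^{\beta}\le B$. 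Your approach is cleaner in that it avoids the external citation and does not re-use the inequality \eqref{gtroei1}; the paper's approach has the virtue of illustrating that the equality case is already encoded in the sharpness of the bounds. Both are valid.
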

\begin{proof}
Note that for any $0< \lambda\leq 1$ and $x>0$
\begin{align}\label{lnllog1}
1-x^{-1}\leq \ln_{\lambda}x \leq x-1.
\end{align}
Using perspective function associated with the three functions above with $h(t)=t^{\beta}$ and applying Theorem \ref{tnapf} to (\ref{lnllog1}), we obtain desired result.

By MacDonald's theorem,
\begin{align*}
A\#_{(0,\beta)}B-	A\#_{(-1,\beta)}B&=A^{\beta}-\{A^{\beta}B^{-1}A^{\beta}\}\\
A\#_{(1,\beta)}B-	A\#_{(0,\beta)}B&=B-A^{\beta}	
\end{align*}
Suppose that $T_{\lambda, \beta}(A\vert B)=0,$ then
\begin{align*}
A^{\beta}-\{A^{\beta}B^{-1}A^{\beta}\}\leq 0\leq B-A^{\beta}.
\end{align*}
Consequently,
\begin{align*}
A^{-\beta}-B^{-1}\leq 0\leq B-A^{\beta}.	
\end{align*}
According to \cite[Lemma 3.5.3]{Hanche84jordan}, $A^{\beta}\leq B\leq A^{\beta}.$
\end{proof}
\begin{remark}
If $\beta=1,$ then $(\ref{gtroei1})$ becomes
\begin{align}\label{troei}
A-\{AB^{-1}A\}\leq T_{\lambda}(A\vert B)\leq -A+B.
\end{align}
If in addition $\Al$ is special, then {\rm Proposition \ref{Pgtroei1}} reduces to  \cite[Proposition 3.4]{Furuichi05Tsallis}.\end{remark}
Denote 
$${\rm IV}=\frac{1}{2}\left(A\#_{(\lambda, \beta)}B-A\#_{(\lambda-1, \beta)}B+A\#_{(1, \beta)}B-A\#_{(0, \beta)}B\right).$$	

The following result provides an improvement for the lower and upper bounds of Tsallis relative operator $(\lambda, \beta)$-entropy.
\begin{proposition}
Let $A$ and $B$ be two positive invertible elements in a unital JB-algebra $\Al,$ $0<\lambda\leq 1$ and $\beta>0.$
\begin{itemize}
\item[(i)] If $A^{\beta}\leq B,$ then
\begin{align}
A\#_{(0, \beta)}B-A\#_{(-1, \beta)}B
&\leq A\#_{(\lambda, \beta)}B-A\#_{(\lambda-1, \beta)}B \nonumber\\
&\leq T_{\lambda ,\beta}(A\vert B)\nonumber\\
&\leq {\rm IV}
\leq A\#_{(1,\beta)}B-	A\#_{(0,\beta)}B\nonumber	
\end{align}
\item[(ii)] If $B\leq A^{\beta},$ then
\begin{align}
A\#_{(0, \beta)}B-A\#_{(-1, \beta)}B
&\leq A\#_{(\lambda, \beta)}B-A\#_{(\lambda-1, \beta)}B\nonumber\\
&\leq {\rm IV}\leq T_{\lambda ,\beta}(A\vert B)\nonumber\\
&\leq A\#_{(1,\beta)}B-	A\#_{(0,\beta)}B \nonumber	
\end{align}	
\end{itemize}
\end{proposition}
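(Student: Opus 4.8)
The plan is to follow the pattern of Proposition~\ref{Pgtroei1} and Theorem~\ref{ulbroe}: express every term of the two chains as a nonassociative perspective function $P_{\,\cdot\,\triangle h}(B,A)$ with $h(t)=t^{\beta}$, reduce the operator inequalities to pointwise inequalities between scalar functions on the spectrum of $\{A^{-\beta/2}BA^{-\beta/2}\}$, and then invoke the monotonicity principle of Theorem~\ref{tnapf}. Concretely, with $h(t)=t^{\beta}$ and using linearity of $U_{A^{\beta/2}}$ together with functional calculus, the five members of each chain are, respectively, $P_{s\triangle h}(B,A)$, $P_{r\triangle h}(B,A)$, $P_{\ell\triangle h}(B,A)$, $P_{m\triangle h}(B,A)$, $P_{t\triangle h}(B,A)$, where
\begin{align*}
s(x)&=1-x^{-1},\quad r(x)=x^{\lambda}-x^{\lambda-1},\quad \ell(x)=\ln_{\lambda}x,\\
m(x)&=\tfrac12\bigl(x^{\lambda}-x^{\lambda-1}+x-1\bigr),\quad t(x)=x-1
\end{align*}
(for instance $A\#_{(\lambda,\beta)}B-A\#_{(\lambda-1,\beta)}B=P_{r\triangle h}(B,A)$, $T_{\lambda,\beta}(A\vert B)=P_{\ell\triangle h}(B,A)$, and ${\rm IV}=P_{m\triangle h}(B,A)$). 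Thus the proposition follows once the scalar functions are ordered correctly on $[1,\infty)$ for part (i) and on $(0,1]$ for part (ii).

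The scalar facts I would establish, valid for $0<\lambda\le1$, are: (S1) $1-x^{-1}\le x^{\lambda}-x^{\lambda-1}$ for all $x>0$; (S2) $x^{\lambda}-x^{\lambda-1}\le\ln_{\lambda}x\le x-1$ for all $x>0$; (S3) $x^{\lambda}-x^{\lambda-1}\le m(x)\le x-1$ for all $x>0$; and (S4) $\ln_{\lambda}x\le m(x)$ when $x\ge1$, while $\ln_{\lambda}x\ge m(x)$ when $0<x\le1$. Here (S1) is immediate since $1-x^{-1}=x^{-1}(x-1)$, $x^{\lambda}-x^{\lambda-1}=x^{\lambda-1}(x-1)$ and $a\mapsto x^{a}$ is monotone; it is also Proposition~\ref{orlurnp}(iii) with $\alpha=\lambda$. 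For (S2) and (S4) I would use the integral representation $\ln_{\lambda}x=\dfrac{x^{\lambda}-1}{\lambda}=\displaystyle\int_{1}^{x}t^{\lambda-1}\,dt$ and the fact that $g(t)=t^{\lambda-1}$ is positive, nonincreasing and convex on $(0,\infty)$: bounding $g$ on $[1,x]$ (resp.\ $[x,1]$) between its endpoint values yields (S2), and the trapezoid half of the refined Hermite--Hadamard inequality (\cite[Lemma~2.7]{Wang2020a}) applied to $g$ on $[1,x]$ gives $\ln_{\lambda}x\le\frac{(x-1)(1+x^{\lambda-1})}{2}=m(x)$ for $x\ge1$, and the reverse on $[x,1]$ for $0<x\le1$, which is (S4). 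Finally (S3) is elementary: $m$ is the midpoint of $x^{\lambda}-x^{\lambda-1}$ and $x-1$, and $x^{\lambda}-x^{\lambda-1}\le x-1$ for all $x>0$ because $x^{\lambda-1}\le1$ on $[1,\infty)$ while $x^{\lambda-1}\ge1$ and $x-1\le0$ on $(0,1]$. Note the upper bound $\ln_{\lambda}x\le x-1$ in (S2) is exactly $(\ref{lnllog1})$.

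Next I would convert the operator hypotheses into spectral ones. If $A^{\beta}\le B$, applying the positive, order-preserving map $U_{A^{-\beta/2}}$ (Proposition~\ref{3inv} together with linearity of $U$) and using $\{A^{-\beta/2}A^{\beta}A^{-\beta/2}\}=I$ gives $\{A^{-\beta/2}BA^{-\beta/2}\}\ge I$, so its spectrum is contained in $[1,\infty)$; dually $B\le A^{\beta}$ forces the spectrum into $(0,1]$. In case (i) the chain $s\le r\le\ell\le m\le t$ holds on $[1,\infty)$ by (S1), (S2), (S4), (S3), so Theorem~\ref{tnapf} produces precisely the first chain of the proposition; in case (ii) the chain $s\le r\le m\le\ell\le t$ holds on $(0,1]$ by (S1), (S3), (S4), (S2), and Theorem~\ref{tnapf} produces the second. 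In particular this refines Proposition~\ref{Pgtroei1} by inserting ${\rm IV}$ and the $(\lambda,\beta)$-geometric-mean difference between its two outer bounds.

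The computations are otherwise routine; the only genuinely substantive point is (S4), i.e.\ determining the sign of $\ln_{\lambda}x-m(x)$ on each side of $x=1$ via the trapezoid (Hermite--Hadamard) inequality for $t^{\lambda-1}$, and then correctly matching the regime $x\ge1$ with the hypothesis $A^{\beta}\le B$ and the regime $x\le1$ with $B\le A^{\beta}$ when invoking Theorem~\ref{tnapf}.
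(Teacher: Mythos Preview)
Your proposal is correct and follows essentially the same approach as the paper: reduce each chain to a chain of scalar inequalities via the perspective functions $P_{\,\cdot\,\triangle h}(B,A)$ with $h(t)=t^{\beta}$ and then invoke Theorem~\ref{tnapf}, establishing the pivotal comparison $\ln_{\lambda}x\lessgtr m(x)$ by applying the Hermite--Hadamard (trapezoid) inequality to $t^{\lambda-1}$ on $[1,x]$ or $[x,1]$. Your write-up is simply more explicit than the paper's---you spell out (S1)--(S3) and the spectral reduction $A^{\beta}\le B\Rightarrow\Sp{\{A^{-\beta/2}BA^{-\beta/2}\}}\subset[1,\infty)$---but the structure and the key lemma are the same.
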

\begin{proof}
Proof of (i).
For any $x\geq 1$ and $0<\lambda\leq 1,$ we have the following inequalities
\begin{align}\label{rftroei2}
1-\frac{1}{x}
\leq x^{\lambda}-x^{\lambda-1}
\leq \ln_{\lambda}x
\leq \frac{1}{2}(x^{\lambda}-x^{\lambda -1}+x-1)
\leq x-1.	
\end{align}
Applying Theorem \ref{tnapf} to (\ref{rftroei2}) with $h(t)=t^{\beta},$ we obtain desired result. 

For (ii), applying Hermite-Hadamard integral inequality to  $f(t)=t^{\lambda-1}$ on $[x, 1],$   
\begin{align}\label{rftroei3}
1-\frac{1}{x}
\leq x^{\lambda}-x^{\lambda-1}
\leq \frac{1}{2}(x^{\lambda}-x^{\lambda -1}+x-1)
\leq \ln_{\lambda}x
\leq x-1
\end{align}	
Using the perspective functions associated with the functions in (\ref{rftroei3}) and applying Theorem \ref{tnapf}, (ii) follows.
\end{proof}


\end{document}